\begin{document}

\def\myend{{}\hfill{\small$\bigcirc$}}
\newfloat{diagram}{H}{figure}
\floatname{diagram}{Figure}

\let\goth\mathfrak

\newcommand\theoremname{Theorem}
\newcommand\lemmaname{Lemma}
\newcommand\corollaryname{Corollary}
\newcommand\propositionname{Proposition}
\newcommand\factname{Fact}
\newcommand\remarkname{Remark}
\newcommand\examplename{Example}

\newtheorem{thm}{\theoremname}[section]
\newtheorem{lem}[thm]{\lemmaname}
\newtheorem{cor}[thm]{\corollaryname}
\newtheorem{prop}[thm]{\propositionname}
\newtheorem{fact}[thm]{\factname}
\newtheorem{exmx}[thm]{\examplename}
\newenvironment{exm}{\begin{exmx}\normalfont}{\end{exmx}}
\newtheorem{rem}[thm]{\remarkname}


\newcounter{sentence}
\def\thesentence{\roman{sentence}}
\def\labelsentence{\upshape(\thesentence)}
\def\bigtimes{\mbox{\Large$\times$}}
\newenvironment{sentences}{%
        \list{\labelsentence}
          {\usecounter{sentence}\def\makelabel##1{\hss\llap{##1}}
            \topsep3pt\leftmargin0pt\itemindent40pt\labelsep8pt}%
  }{%
    \endlist}

\newenvironment{ctext}{%
  \par
  \smallskip
  \centering
}{%
 \par
 \smallskip
 \csname @endpetrue\endcsname
}
\newenvironment{cmath}{%
  \par
  \smallskip
  \centering
  $
}{%
  $
  \par
  \smallskip
  \csname @endpetrue\endcsname
}


\newcommand*{\sub}{\raise.5ex\hbox{\ensuremath{\wp}}}
\newcommand{\nat}{{N}}   
\newcommand{\msub}{\mbox{\large$\goth y$}}    
\def\id{\mathrm{id}}
\def\suport{{\mathrm{supp}}}
\def\Aut{\mathrm{Aut}}
\newcommand*{\struct}[1]{{\ensuremath{\langle #1 \rangle}}}

\def\bagnom#1#2{\genfrac{[}{]}{0pt}{}{#1}{#2}}

\def\kros(#1,#2){c_{\{ #1,#2 \}}}
\def\skros(#1,#2){{\{ #1,#2 \}}}
\def\LineOn(#1,#2){\overline{{#1},{#2}\rule{0em}{1,5ex}}}
\def\inc{\mathrel{\strut\rule{3pt}{0pt}\rule{1pt}{9pt}\rule{3pt}{0pt}\strut}}
\def\lines{{\cal L}}
\def\tran{{\mathscr T}}
\def\collin{\sim}
\def\wspolin{{\bf L}}
\def\ncollin{\not\collin}
\def\ciach#1{\upharpoonright_{\;{#1}}}
\def\mappedby#1{%
{\rule{0pt}{2.5ex}}\mkern8mu{\longmapsto\mkern-25mu{\raise1.3ex\hbox{$#1$}}}
\mkern20mu{\rule{0pt}{4pt}}
}


\title{On some generalization of the M\"obius configuration}

\author{Krzysztof Petelczyc}

\pagestyle{myheadings}
\markboth{K. Petelczyc}{Generalization of the M\"obius configuration}

\maketitle

\bigskip

\par\noindent{\small
Author's address:\\
Krzysztof Petelczyc\\
Institute of Mathematics, University of Bia{\l}ystok\\
ul. Akademicka 2\\
15-246 Bia{\l}ystok, Poland\\
e-mail: {\ttfamily kryzpet@math.uwb.edu.pl}}

\bigskip

\begin{abstract}
The M\"obius $(8_4)$ configuration is generalized in a purely combinatorial approach. We consider 
$(2n_n)$ configurations ${\goth M}_{(n,\varphi)}$ depending on a permutation $\varphi$ in the symmetric
group $S_n$. Classes of non-isomorphic configurations of this type are determined.
The parametric characterization of ${\goth M}_{(n,\varphi)}$ is given. 
The uniqueness of the decomposition of ${\goth M}_{(n,\varphi)}$ into two mutually inscribed $n$-simplices is
discussed.
The automorphisms of ${\goth M}_{(n,\varphi)}$ are characterized for $n\geq 3$.
\\
{\em Key words}: the M\"obius configuration, $(8_4)$ configurations, M\"obius pair, $n$-simplex.
\\
MSC(2000): 51D20, 05B30, 51E30.
\end{abstract}

\section*{Introduction}
The M\"obius $(8_4)$ configuration is a certain configuration in a projective $3$-dimensional space consisting
of two mutually inscribed and circumscribed tetrahedra (cf. \cite{moebius}). Each vertex of one tetrahedron lies on a face plane of the other tetrahedron and vice versa. 
The axiom connected with the M\"obius configuration plays the same role as
the Pappus axiom in a projective $3$-dimensional space (cf. \cite{witczyn}): it is equivalent to the commutativity of the ground division ring.
\par
In this paper we deal with two $n$-simplices (simplices with $n$ vertices, $n\geq 3$) instead of two
tetrahedra ($4$-simplices). The way how $n$-simplex is inscribed into another we define by a permutation $\varphi$ in the group $S_n$. The generalization of the M\"obius configuration we obtain, is a $(2n_n)$-configuration and it will be referred to as a {\em M\"obius pair of $n$-simplices}, or shortly a {\em M\"obius $n$-pair}. Only a combinatorial scheme of a M\"obius $n$-pair is
investigated and we do not discuss problems regarding embeddability into projective spaces. Although these problems have been partially solved in \cite{havlik} (the case with $\varphi=\id$), they are interesting and still open in general.
\par
As we know from \cite{hilbert}, in a projective space, up to an isomorphism there are five $(8_4)$ point-plane configurations with the property that at most two planes share two points, and dually at most two points are shared by two planes. These are precisely those configurations with two mutually circumscribed tetrahedra, and thus all of them are sometimes called the M\"obius configurations.
It is also known, that these $(8_4)$ configurations correspond to conjugacy classes of the permutation group $S_4$. We shall prove, that two
M\"obius $n$-pairs are isomorphic if and only if the permutations, that determine them, are conjugate. Another important impact of the permutation on the geometry of the M\"obius $n$-pair is that
the cycle structure of $\varphi$ is associated with circuits in the incidence graph of the M\"obius $n$-pair. 
\par
As we shall see, the decomposition of the points of the generalized M\"obius configuration into two complementary and mutually inscribed simplices is, generally, a unique one. Exceptions appear ``near" the classical case $n=4$. Three of five $(8_4)$ M\"obius configurations contain at least two distinct pairs of complementary $4$-simplices.
\par
The next problem, which is considered in the paper, involves M\"obius subpairs of a M\"obius $n$-pair. We simply delete some number of points and blocks of one $n$-simplex and the same number of points and blocks of the second $n$-simplex with a hope to obtain a M\"obius pair again. The conditions, under which we get a subpair in the M\"obius $n$-pair, are determined. 
\par
In the last part we use most of the established properties to characterize the automorphism group of the M\"obius $n$-pair for $n\geq 3$.


\section{Definitions, parameters and basic properties}

By {\em configuration} we mean any point-block structure, where blocks are subsets of the set of points.
Let ${\sub}_{k}(X)$ stand for a family of all
$k$-subsets of the set $X$.
The {\em rank of a point} is the number of blocks containing this point, and dually the {\em size of a block}
is the number of points contained in this block.
We say that two configurations ${\goth M}_1=\struct{S_1,\lines_1}, {\goth M}_2=\struct{S_2,\lines_2}$ 
are {\em isomorphic} (and we write ${\goth M}_1\cong {\goth M}_2$) iff there exists a bijective map $f\colon S_1\longrightarrow S_2$ such that
conditions $k\in\lines_1$ and $f(k)\in \lines_2$ are equivalent. In case ${\goth M}_1={\goth M}_2=\goth M$ the map $f$
will be called an {\em automorphism of $\goth M$}.
\par
Let us consider two sets 
$A=\{a_1,\ldots,a_n\}$ and $B=\{b_1,\ldots, b_n\}$ such that $A\cap B=\emptyset$ and $n\geq 3$.
Let $\varphi\in S_n$ be a permutation of the set 
$ I =\{1,\ldots,n\}$. 
Now we introduce the following sets:
$$\begin{array}{lll}
\lines_A & := & \{A'\cup\{b_i\}\colon A'\in {\sub}_{n-1}(A) \text{ and } a_i\notin A'\}, \\
\lines_B & := & \{B'\cup\{a_{\varphi(i)}\}\colon B'\in \sub_{n-1}(B) \text{ and } b_i\notin B'\}.
\end{array}$$
The configuration 
 $${\goth M}_{(n,\varphi)}:=\struct{A\cup B,\lines_A\cup \lines_B},$$
will be called a {\em M\"obius $n$-pair}.
The M\"obius configurations can be identified with the M\"obius $4$-pairs (see Figures \ref{levi:moeb_id}, \ref{levi:moeb_1cyc}, \ref{levi:moeb_2cyc13}, \ref{levi:moeb_3cyc},
\ref{levi:moeb_2cyc22}). In particular,
${\goth M}_{(4,\id)}$ is the classical $(8_4)$ M\"obius configuration. 
\par
Let $\cal M$ be a M\"obius $n$-pair.
We write: $A_i,B_i$ for blocks of $\cal M$ not containing $a_i, b_i$, respectively; 
{\em $a$-points}, {\em $b$-points}, {\em $A$-blocks}, {\em $B$-blocks} for points in $A$, $B$, and blocks in 
$\lines_A$, $\lines_B$, respectively.
The configuration $\cal M$ reflects main abstract properties of the classical M\"obius configuration.
\begin{itemize}\itemsep-2pt
\item
The $a$-points yield a simplex in $\cal M$: for any $(n-1)$-subset $A\setminus\{a_i\}$ of the $a$-points
there is a (unique) block of $\cal M$, which contains this subset ($A_i$, a face of the simplex in question); the remaining points ($b$-points) yield another simplex.
\item
The simplex with $a$-points and the simplex with $b$-points are mutually inscribed:
on each face, $A_i$, of the first simplex there is a unique vertex ($b_i$) of the second one; on each face, $B_i$, of the second simplex there is a unique vertex ($a_{\varphi(i)}$) of the first simplex. 
\end{itemize}
Thus, we can decompose $\cal M$ into two complementary substructures $S_A({\cal M})=\struct{A,\lines_A}$ and 
$S_B({\cal M})=\struct{B,\lines_B}$, which we call {\em simplices of $\cal M$} (although, formally, a block of each of them is not a subset of its points; there is one extra point on each of its faces). 
\par
In the forthcoming part we will use the notion of the incidence graph (the Levi graph) ${\cal G}_{\cal M}$
associated with $\cal M$.
Recall that a Levi graph is a bipartite graph with partition induced by points vs. blocks.
Two of its vertices $x,y$ are said to be {\em adjacent} (which is written $x\sim y$) if $x$ is a point, $y$ is a block (or vice versa) and $x\in y$ (or $y\in x$). Otherwise $x$ is not adjacent to $y$, which we write $x\nsim y$.
The {\em rank of a vertex} is the number of vertices adjacent to it.
A vertex of ${\cal G}_{\cal M}$ will be called point-vertex, block-vertex, $a$,$b$-vertex, $A$,$B$-vertex, or simply $a_i,b_i,A_i,B_i$ as it corresponds to the point or to the block of ${\cal M}$.
The Levi graph associated with $S_A({\cal M}), S_B({\cal M})$ will be denoted by ${\cal G}_{S_A({\cal M})}, {\cal G}_{S_B({\cal M})}$, respectively.

\begin{rem}\label{rem:Levi}
Let $\cal M$ be a M\"obius $n$-pair.
The Levi graph ${\cal G}_{\cal M}$ has the following properties:
\begin{sentences}\itemsep-2pt
\item\label{one-simpl}
for $X=A,B$,
every point-vertex from ${\cal G}_{S_X({\cal M})}$
is adjacent to all but one block-vertices from ${\cal G}_{S_X({\cal M})}$, and vice versa, 
\item\label{two-simpl}
for $X,Y=A,B$ and $X\neq Y$,
every point-vertex from ${\cal G}_{S_X({\cal M})}$ is adjacent to precisely one
block-vertex from ${\cal G}_{S_Y({\cal M})}$, and vice versa.
\end{sentences}
\end{rem}
Immediately from the definition of ${\goth M}_{(n,\varphi)}$, the number of its points coincides with the number of its blocks and equals $2n$, and the rank of every point coincides with the size of every block and equals $n$. Thus the
structures we investigate are $(2n_n)$-configurations.
A standard parametric question related to configurations is: what is the number of points that are contained in two distinct blocks,  
and dually: what is the number of blocks containing two distinct points.
\begin{prop}\label{lem:blockcap}
Let $k,l$ be two different blocks of the structure ${\goth M}_{(n,\varphi)}$.
Then $|k\cap l|\in \{0,1,2,n-2\}$. If both $k$, $l$ are $A$-blocks, or both
$k$, $l$ are $B$-blocks then $|k\cap l|=n-2$. Otherwise, 
 $k=A_i$ and $l=B_j$ for some
$i,j\in I$, and the following equivalences hold
\begin{sentences}\itemsep-2pt
\item\label{case:zero}
$|A_i\cap B_j|=0$ iff $\varphi(j)=i=j$,
\item\label{case:one}
$|A_i\cap B_j|=1$ iff $\varphi(j)=i\neq j$ or $\varphi(j)\neq i= j$,
\item\label{case:two}
$|A_i\cap B_j|=2$ iff $\varphi(j)\neq i\neq j$.
\end{sentences}
\end{prop}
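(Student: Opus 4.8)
The plan is to make the two blocks completely explicit and then split every intersection into its $a$-point part and its $b$-point part, which are disjoint because $A\cap B=\emptyset$. Unwinding the definitions, the block not containing $a_i$ is $A_i=(A\setminus\{a_i\})\cup\{b_i\}$, and the block not containing $b_j$ is $B_j=(B\setminus\{b_j\})\cup\{a_{\varphi(j)}\}$. Thus each block carries $n-1$ ``home'' points together with exactly one ``foreign'' point, and since intersection is symmetric the three cases (two $A$-blocks, two $B$-blocks, one of each) can be treated separately, labelling the $A$-block as $k$ in the mixed case.

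First I would dispose of the same-type cases. For distinct $A$-blocks $A_i,A_j$ the $a$-point parts are $A\setminus\{a_i\}$ and $A\setminus\{a_j\}$, meeting in $A\setminus\{a_i,a_j\}$, that is in $n-2$ points, while the foreign $b$-points $b_i,b_j$ differ since $i\neq j$; hence $|A_i\cap A_j|=n-2$. The argument for two $B$-blocks is identical, except that the distinctness of the foreign $a$-points $a_{\varphi(i)},a_{\varphi(j)}$ now uses the injectivity of $\varphi$; again the intersection has size $n-2$.

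The heart of the matter is the mixed case $k=A_i$, $l=B_j$, where I would compute the two parts independently. The $a$-points common to $A_i$ and $B_j$ form the set $\{a_{\varphi(j)}\}\cap(A\setminus\{a_i\})$, which is nonempty (and then a singleton) exactly when $\varphi(j)\neq i$; the $b$-points common to them form $\{b_i\}\cap(B\setminus\{b_j\})$, nonempty exactly when $i\neq j$. Since these two contributions live in the disjoint sets $A$ and $B$, they simply add, so $|A_i\cap B_j|$ equals the number of the two conditions $\varphi(j)\neq i$ and $i\neq j$ that are satisfied, a value in $\{0,1,2\}$. Combined with the same-type computation this already yields $|k\cap l|\in\{0,1,2,n-2\}$.

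Finally I would read off the three equivalences by a short case analysis on the predicates $\varphi(j)=i$ and $i=j$: the size is $0$ precisely when both ``$\neq$'' tests fail, i.e. $\varphi(j)=i=j$; it is $2$ precisely when both hold, i.e. $\varphi(j)\neq i\neq j$; and it is $1$ in the two remaining cases $\varphi(j)=i\neq j$ and $\varphi(j)\neq i=j$. I do not anticipate a genuine obstacle here; the only points demanding care are the index bookkeeping introduced by the twist $\varphi$ in the $B$-blocks, and the explicit use of $A\cap B=\emptyset$ to guarantee that the $a$-part and the $b$-part of each intersection cannot interfere.
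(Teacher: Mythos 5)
Your proof is correct and follows essentially the same route as the paper's: an explicit unwinding of the definitions of $A_i$ and $B_j$ followed by a case analysis on the two predicates $\varphi(j)=i$ and $i=j$. The only difference is presentational --- you organize the mixed case as a count of satisfied inequalities and make explicit two details the paper leaves implicit (the role of $A\cap B=\emptyset$ and the injectivity of $\varphi$ in the two-$B$-blocks case) --- which is a welcome clarification but not a different argument.
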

\begin{proof}
It is straightforward from the definition that if $k,l$ are both $A$-blocks or $B$-blocks then $k\cap l$ has $n-2$ elements. Let $k=A_i\in\lines_A$ and $l=B_j\in\lines_B$ for some $i,j\in I$. Let $i\neq j$.
If $\varphi(j)\neq i$ then $A_i\cap B_j=\{b_i, a_{\varphi(j)}\}$.
Otherwise, for $\varphi(j)=i$, we get $A_i\cap B_j=\{b_i\}$.
Let $i=j$. If $\varphi(i)\neq i$ we obtain $A_i\cap B_i=\{a_{\varphi(i)}\}$. 
In case $\varphi(i)=i$ it holds $A_i\cap B_i=\emptyset$.
\end{proof}

Each conjugacy class of $S_n$ corresponds to exactly one decomposition of a permutation $\varphi\in S_n$
into cycles, up to a permutation of the elements of $I$. 
Now we describe how the cycle structure of $\varphi$ is reflected
in block paths of ${\goth M}_{(n,\varphi)}$.
\begin{fact}\label{fact:cycles}
A permutation $\varphi$ contains a cycle of length $k\leq n$ iff there is a path
of length $2k$ consisting of blocks of ${\goth M}_{(n,\varphi)}$ such that, every two consecutive blocks
intersect in precisely one point of ${\goth M}_{(n,\varphi)}$.

\end{fact}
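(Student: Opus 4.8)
The plan is to extract the single-point intersections of blocks directly from Proposition~\ref{lem:blockcap} and to recognise the resulting adjacency pattern as an encoding of the cycle structure of $\varphi$. By that proposition two $A$-blocks, or two $B$-blocks, always share $n-2$ points; so for $n\geq 4$ a pair of blocks meeting in exactly one point must consist of one $A$-block and one $B$-block, and by part~\ref{case:one} of Proposition~\ref{lem:blockcap} we have $|A_i\cap B_j|=1$ precisely when $\varphi(j)=i\neq j$, the common point then being $b_i$, or when $i=j$ and $\varphi(i)\neq i$, the common point then being $a_{\varphi(i)}$. Hence, for $n\geq 4$, the $A$- and $B$-blocks must alternate along any path whose consecutive members meet in one point, and each step is governed by $\varphi$. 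Throughout I read ``path of length $2k$'' as a closed path, in line with the circuits mentioned in the introduction.

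For the forward implication I would take a cycle $(i_1\,i_2\,\ldots\,i_k)$ of $\varphi$ with $k\geq 2$, so that $\varphi(i_t)=i_{t+1}$ with indices read modulo $k$, and exhibit the closed path
\[
A_{i_1},\ B_{i_1},\ A_{i_2},\ B_{i_2},\ \ldots,\ A_{i_k},\ B_{i_k},\ A_{i_1}.
\]
Proposition~\ref{lem:blockcap} gives $A_{i_t}\cap B_{i_t}=\{a_{i_{t+1}}\}$ and $B_{i_t}\cap A_{i_{t+1}}=\{b_{i_{t+1}}\}$, each of size one; since $k\geq 2$ the listed $2k$ blocks are pairwise distinct, so this is indeed a path of length $2k$ whose consecutive members meet in precisely one point.

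For the converse I would begin with such a path and, using the alternation above, write it as $A_{j_1},B_{l_1},A_{j_2},B_{l_2},\ldots$. Around any $B$-block its two incident edges cannot both satisfy $\varphi(l)=j$ nor both satisfy $j=l$, for in either case the two neighbouring $A$-blocks would be equal, contradicting that the vertices of a path are distinct; the same dichotomy holds around every $A$-block. This forces exactly the pattern of the forward construction, so that the indices run through $i_1,\varphi(i_1),\varphi^2(i_1),\ldots$; a closed path of length $2k$ then yields $\varphi^k(i_1)=i_1$ with $i_1,\ldots,i_k$ distinct, that is, a cycle of $\varphi$ of length exactly $k$.

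The main obstacle is the converse, and within it the reduction to the alternating case. For $n\geq 4$ the inequality $n-2\geq 2$ forbids single-point intersections between like-typed blocks, which is precisely what makes the block adjacency bipartite and drives the whole argument. For $n=3$, however, $n-2=1$, so two $A$-blocks (or two $B$-blocks) may meet in a single point; the alternation can then fail and the correspondence breaks down, so this boundary value must be treated separately (or the claim read with $n\geq 4$). The remaining work is bookkeeping: checking that closing the path of length $2k$ produces a cycle of length exactly $k$ rather than merely a $\varphi$-orbit of size at least $k$, and discarding the degenerate value $k=1$, where a fixed point $i$ gives $A_i\cap B_i=\emptyset$ and no path of length $2$ arises.
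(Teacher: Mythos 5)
Your proof is correct and takes essentially the same route as the paper's: the forward direction exhibits the identical alternating closed path $A_{i_1},B_{i_1},A_{i_2},B_{i_2},\ldots,A_{i_k},B_{i_k}$, and the converse uses the same alternation-plus-index-tracking argument in which closing the path forces $\varphi^k(i_1)=i_1$ on $k$ distinct indices. If anything you are more careful than the paper: you explicitly flag that the alternation argument via Proposition~\ref{lem:blockcap} requires $n\geq 4$ (for $n=3$ two $A$-blocks meet in a single point, and the claimed equivalence genuinely breaks), that ``path of length $2k$'' must be read as a closed path, and that $k=1$ is degenerate since a fixed point $i$ gives $A_i\cap B_i=\emptyset$ --- restrictions the paper's proof silently assumes.
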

\begin{proof} 
Assume that $\varphi$ contains the cycle $({i_1} {i_2}\ldots {i_k})$. 
Then $a_{i_{j+1}}\in A_{i_j}\cap B_{i_j}$ and $b_{i_{j+1}}\in B_{i_j}, A_{i_{j+1}}$ for each $j\leq k$.
Thus, the path in question is the following:
$A_{i_1}$, $B_{i_1}$, $A_{i_2}$, $B_{i_2}$,
\ldots, $A_{i_k}$, $B_{i_k}$.
\par
Now assume that there exists a closed path $l_1,l'_1,\ldots,l_{k},l'_{k}$
of blocks of ${\goth M}_{(n,\varphi)}$ such that, every two consecutive blocks
intersect in a point. By \ref{lem:blockcap}\eqref{case:one} every two consecutive blocks of the path are 
$A_{i}\in\lines_A$, $B_{j}\in\lines_B$ with $\varphi(j)=i\neq j$ or $\varphi(j)\neq i= j$.
Suppose $\varphi(j)\neq i= j$ holds for the first two blocks of our path, namely $l_1=A_{i}$, $l'_1=B_{i}$ and 
$\varphi(i)\neq i$ for some $i\in I$. To obtain $|l'_1\cap l_2|=1$ we must have $l_2=A_{j}$ with
$\varphi(i)=j$. Thus the next two blocks are
$l_2=A_{\varphi(i)}$, $l'_2=B_{\varphi(i)}$ and
$\varphi(\varphi(i))\neq \varphi(i)$. In general we obtain 
$l_{j}=A_{\varphi^{j-1}(i)}$, $l'_{j}=B_{\varphi^{j-1}(i)}$ and
$\varphi^{j-1}(i)\neq \varphi^{j-2}(i)$ for every $j=2,\ldots,k$.
To close the path we need $\varphi^{k}(i)=i$. Let us put $i=i_0$.
Then the cycle $(i_0,i_1,\ldots,i_{k-1})$, where $i_j=\varphi^j(i)$ for
$j=0,\ldots,k-1$, is one of the cycles in the cycle decomposition of $\varphi$.
\end{proof}
As the configuration ${\goth M}_{(n,\varphi)}$ is symmetric, it makes sense to consider
the dual configuration ${\goth M}^\circ_{(n,\varphi)}$. 
\begin{fact}\label{dualdes}
The configuration
${\goth M}^\circ_{(n,\varphi)}$ is isomorphic to ${\goth M}_{(n,\varphi)}$.
\end{fact}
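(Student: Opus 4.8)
The plan is to realise the self-duality by a \emph{correlation} of ${\goth M}_{(n,\varphi)}$, that is, a bijection $\delta$ of the union of the point set $A\cup B$ with the block set $\lines_A\cup\lines_B$ onto itself which interchanges points and blocks and reverses incidence, i.e.\ $p\in L\iff\delta(L)\in\delta(p)$ for every point $p$ and block $L$. In terms of the Levi graph this is simply an automorphism of ${\cal G}_{\cal M}$ that exchanges the two parts of the bipartition. The translation of such a $\delta$ into the asserted isomorphism is routine: writing $p^\circ=\{L:p\in L\}$ for the blocks of ${\goth M}^\circ_{(n,\varphi)}$ and taking $f:=\delta|_{\lines_A\cup\lines_B}$ (a bijection from the points of the dual onto $A\cup B$), the incidence-reversing property gives $f(p^\circ)=\{\delta(L):\delta(L)\in\delta(p)\}=\delta(p)$, so $f$ carries the blocks of ${\goth M}^\circ_{(n,\varphi)}$ bijectively onto the blocks of ${\goth M}_{(n,\varphi)}$; being an injection that matches blocks with blocks, $f$ is the desired isomorphism.

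To build $\delta$ I respect the two simplices and set $\delta(a_i)=A_{\alpha(i)}$, $\delta(b_i)=B_{\beta(i)}$, $\delta(A_i)=a_{\gamma(i)}$, $\delta(B_i)=b_{\eta(i)}$ for permutations $\alpha,\beta,\gamma,\eta\in S_n$ to be chosen. The four incidence patterns are read off directly from the definition of ${\goth M}_{(n,\varphi)}$: $a_j\in A_i\iff j\neq i$, $b_j\in A_i\iff j=i$, $a_j\in B_i\iff j=\varphi(i)$ and $b_j\in B_i\iff j\neq i$. Imposing $p\in L\iff\delta(L)\in\delta(p)$ in each of the four cases and comparing indices yields the constraints $\gamma=\alpha$, $\eta=\beta$, $\alpha=\varphi\beta$ and $\eta=\alpha\varphi$. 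Eliminating $\beta,\gamma,\eta$ collapses everything to a single equation for $\alpha$, namely $\varphi\alpha\varphi=\alpha$, equivalently $\alpha^{-1}\varphi^{-1}\alpha=\varphi$.

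Thus the whole proof reduces to finding a permutation conjugating $\varphi^{-1}$ to $\varphi$, and this is the only genuine obstacle. It disappears at once because $\varphi$ and $\varphi^{-1}$ always have the same cycle type and are therefore conjugate in $S_n$; an explicit solution is the involution $\alpha$ that reverses every cycle of $\varphi$, since reversing a cycle conjugates it to its inverse. Fixing such an $\alpha$ and putting $\beta=\alpha\varphi$, $\gamma=\alpha$, $\eta=\beta$ makes $\delta$ a well-defined bijection (all four components are permutations) satisfying the four incidence equivalences by construction, hence a correlation; combined with the first paragraph this proves ${\goth M}^\circ_{(n,\varphi)}\cong{\goth M}_{(n,\varphi)}$. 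As a sanity check, for $\varphi=\id$ the equation is solved by $\alpha=\id$ and $\delta$ becomes the evident duality $a_i\leftrightarrow A_i$, $b_i\leftrightarrow B_i$.
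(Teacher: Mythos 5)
Your proof is correct, and at its core it runs on the same engine as the paper's: both arguments reduce self-duality to the fact that $\varphi$ and $\varphi^{-1}$ are conjugate in $S_n$. The difference is packaging. The paper proceeds in two steps: it first observes that ${\goth M}^\circ_{(n,\varphi)}\cong{\goth M}_{(n,\varphi^{-1})}$ (via the evident swap sending the dual point $A_i$ to $b_i$ and $B_i$ to $a_i$), and then relabels indices by a permutation $\alpha$, i.e.\ $x_i\mapsto x_{\alpha(i)}$, to pass from ${\goth M}_{(n,\varphi^{-1})}$ to ${\goth M}_{(n,\varphi)}$. You instead build a single incidence-reversing correlation $\delta$ of ${\goth M}_{(n,\varphi)}$ and solve the four compatibility equations, which collapse to the same condition $\alpha^{-1}\varphi^{-1}\alpha=\varphi$; in fact your $\delta$ (with $\gamma=\alpha$, $\eta=\beta=\alpha\varphi$) is exactly the composition of the paper's two maps, so the two proofs are the same isomorphism written once rather than as a composite. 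One point where your version is actually more careful: the paper exhibits the concrete permutation $\alpha(1)=1$, $\alpha(m)=n-m+2$, which conjugates $\varphi^{-1}$ to $\varphi$ only for special $\varphi$ (e.g.\ the standard $n$-cycle $(1\,2\,\ldots\,n)$), whereas your choice --- the involution reversing each cycle of $\varphi$ --- works for an arbitrary permutation, which is what the statement requires. So your argument covers the general case cleanly where the paper's explicit witness does not.
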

\begin{proof}
It is easy to note that ${\goth M}^\circ_{(n,\varphi)}\cong {\goth M}_{(n,\varphi^{-1})}$. 
Consider $\alpha\in S_n$ such that $\alpha(1)=1$ and $\alpha(m)=n-m+2$ for $m\in I\setminus\{1\}$.
Let $x\in\{a,b,A,B\}$, $i\in I$.
Then $F: x_i\mapsto x_{\alpha(i)}$ is an isomorphism mapping ${\goth M}_{(n,\varphi^{-1})}$ onto
${\goth M}_{(n,\varphi)}$.
\end{proof}
The problem of two isomorphic M\"obius $n$-pairs will be consider in general in the last section of the paper.
Another parametric characterization is now a simple consequence of \ref{lem:blockcap} and \ref{dualdes}.
\begin{prop}
Let $x,y$ be two different points of ${\goth M}_{(n,\varphi)}$.
There exist $0$, $1$, $2$, or $n-2$ blocks of ${\goth M}_{(n,\varphi)}$ containing $x$ and $y$.
\end{prop}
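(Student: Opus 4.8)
The plan is to derive this statement from its dual, Proposition~\ref{lem:blockcap}, via the self-duality established in Fact~\ref{dualdes}. The key observation is that passing to the dual configuration ${\goth M}^\circ_{(n,\varphi)}$ interchanges the roles of points and blocks while preserving incidence. Consequently, for two distinct points $x,y$ of ${\goth M}_{(n,\varphi)}$, the number of blocks of ${\goth M}_{(n,\varphi)}$ containing both $x$ and $y$ equals the number of points of ${\goth M}^\circ_{(n,\varphi)}$ lying in both of the dual blocks corresponding to $x$ and $y$. Since $x\neq y$, these two dual blocks are distinct, so this count is exactly the cardinality of the intersection of two distinct blocks in ${\goth M}^\circ_{(n,\varphi)}$.

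The second step is to invoke Fact~\ref{dualdes}, which asserts ${\goth M}^\circ_{(n,\varphi)}\cong{\goth M}_{(n,\varphi)}$. Up to isomorphism the dual is itself a M\"obius $n$-pair, and any configuration-isomorphism carries pairs of distinct blocks to pairs of distinct blocks while preserving the size of their intersection. Hence Proposition~\ref{lem:blockcap}, which bounds $|k\cap l|$ for distinct blocks $k,l$ of \emph{any} M\"obius $n$-pair, applies verbatim to ${\goth M}^\circ_{(n,\varphi)}$. This yields that the intersection of two distinct blocks of the dual has cardinality in $\{0,1,2,n-2\}$.

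Combining the two steps gives the desired conclusion: the number of blocks of ${\goth M}_{(n,\varphi)}$ through two distinct points $x,y$ lies in $\{0,1,2,n-2\}$. There is no genuine obstacle here; the argument is a formal transport of Proposition~\ref{lem:blockcap} across the duality isomorphism. The only point requiring a moment's care is to check that the dualization correctly matches ``blocks containing a pair of points'' with ``points shared by a pair of blocks,'' and that distinctness of $x,y$ is preserved for the corresponding dual blocks, both of which are immediate from the definition of the Levi graph and the point--block symmetry of ${\goth M}_{(n,\varphi)}$.
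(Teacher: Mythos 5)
Your argument is correct and is essentially the paper's own: the paper states this proposition as a direct consequence of Proposition~\ref{lem:blockcap} and the self-duality in Fact~\ref{dualdes}, which is precisely the transport-across-duality you carry out (including the check that isomorphisms preserve intersection cardinalities of distinct blocks).
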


\begin{diagram}
\centering
\[
\xymatrixrowsep{0.5in}
\xymatrixcolsep{0.2in}
\xymatrix{%
{a_1}\ar@{-}[dr]\ar@{-}[drr]\ar@{-}[drrr]\ar@{-}[drrrrr]
&
{a_2}\ar@{-}[dl]\ar@{-}[dr]\ar@{-}[drr]\ar@{-}[drrrrr]
&
{a_3}\ar@{-}[dll]\ar@{-}[dl]\ar@{-}[dr]\ar@{-}[drrrrr]
&
{a_4}\ar@{-}[dl]\ar@{-}[dll]\ar@{-}[dlll]\ar@{-}[drrrrr]
&
&
{b_1}\ar@{-}[dr]\ar@{-}[drr]\ar@{-}[drrr]\ar@{-}[dlllll]
&
{b_2}\ar@{-}[dl]\ar@{-}[dr]\ar@{-}[drr]\ar@{-}[dlllll]
&
{b_3}\ar@{-}[dll]\ar@{-}[dl]\ar@{-}[dr]\ar@{-}[dlllll]
&
{b_4}\ar@{-}[dl]\ar@{-}[dll]\ar@{-}[dlll]\ar@{-}[dlllll]
\\
{A_1}
&
{A_2}
&
{A_3}
&
{A_4}
&
&
{B_1}
&
{B_2}
&
{B_3}
&
{B_4}
}\]
\caption{The Levi graph of ${\goth M}_{(4,\id)}$.}
\label{levi:moeb_id}
\end{diagram}

\begin{diagram}
\centering
\[
\xymatrixrowsep{0.5in}
\xymatrixcolsep{0.2in}
\xymatrix{%
{a_1}\ar@{-}[dr]\ar@{-}[drr]\ar@{-}[drrr]\ar@{-}[drrrrrrrr]
&
{a_2}\ar@{-}[dl]\ar@{-}[dr]\ar@{-}[drr]\ar@{-}[drrrr]
&
{a_3}\ar@{-}[dll]\ar@{-}[dl]\ar@{-}[dr]\ar@{-}[drrrr]
&
{a_4}\ar@{-}[dl]\ar@{-}[dll]\ar@{-}[dlll]\ar@{-}[drrrr]
&
&
{b_1}\ar@{-}[dr]\ar@{-}[drr]\ar@{-}[drrr]\ar@{-}[dlllll]
&
{b_2}\ar@{-}[dl]\ar@{-}[dr]\ar@{-}[drr]\ar@{-}[dlllll]
&
{b_3}\ar@{-}[dll]\ar@{-}[dl]\ar@{-}[dr]\ar@{-}[dlllll]
&
{b_4}\ar@{-}[dl]\ar@{-}[dll]\ar@{-}[dlll]\ar@{-}[dlllll]
\\
{A_1}
&
{A_2}
&
{A_3}
&
{A_4}
&
&
{B_1}
&
{B_2}
&
{B_3}
&
{B_4}
}\]
\caption{The Levi graph of ${\goth M}_{(4,\varphi)}$ with $\varphi=(1234)$.}
\label{levi:moeb_1cyc}
\end{diagram}

\begin{diagram}
\centering
\[
\xymatrixrowsep{0.5in}
\xymatrixcolsep{0.2in}
\xymatrix{%
{a_1}\ar@{-}[dr]\ar@{-}[drr]\ar@{-}[drrr]\ar@{-}[drrrrrrr]
&
{a_2}\ar@{-}[dl]\ar@{-}[dr]\ar@{-}[drr]\ar@{-}[drrrr]
&
{a_3}\ar@{-}[dll]\ar@{-}[dl]\ar@{-}[dr]\ar@{-}[drrrr]
&
{a_4}\ar@{-}[dl]\ar@{-}[dll]\ar@{-}[dlll]\ar@{-}[drrrrr]
&
&
{b_1}\ar@{-}[dr]\ar@{-}[drr]\ar@{-}[drrr]\ar@{-}[dlllll]
&
{b_2}\ar@{-}[dl]\ar@{-}[dr]\ar@{-}[drr]\ar@{-}[dlllll]
&
{b_3}\ar@{-}[dll]\ar@{-}[dl]\ar@{-}[dr]\ar@{-}[dlllll]
&
{b_4}\ar@{-}[dl]\ar@{-}[dll]\ar@{-}[dlll]\ar@{-}[dlllll]
\\
{A_1}
&
{A_2}
&
{A_3}
&
{A_4}
&
&
{B_1}
&
{B_2}
&
{B_3}
&
{B_4}
}\]
\caption{The Levi graph of ${\goth M}_{(4,\varphi)}$ with $\varphi=(123)(4)$.}
\label{levi:moeb_2cyc13}
\end{diagram}

\begin{diagram}
\centering
\[
\xymatrixrowsep{0.5in}
\xymatrixcolsep{0.2in}
\xymatrix{%
{a_1}\ar@{-}[dr]\ar@{-}[drr]\ar@{-}[drrr]\ar@{-}[drrrrr]
&
{a_2}\ar@{-}[dl]\ar@{-}[dr]\ar@{-}[drr]\ar@{-}[drrrrr]
&
{a_3}\ar@{-}[dll]\ar@{-}[dl]\ar@{-}[dr]\ar@{-}[drrrrrr]
&
{a_4}\ar@{-}[dl]\ar@{-}[dll]\ar@{-}[dlll]\ar@{-}[drrrr]
&
&
{b_1}\ar@{-}[dr]\ar@{-}[drr]\ar@{-}[drrr]\ar@{-}[dlllll]
&
{b_2}\ar@{-}[dl]\ar@{-}[dr]\ar@{-}[drr]\ar@{-}[dlllll]
&
{b_3}\ar@{-}[dll]\ar@{-}[dl]\ar@{-}[dr]\ar@{-}[dlllll]
&
{b_4}\ar@{-}[dl]\ar@{-}[dll]\ar@{-}[dlll]\ar@{-}[dlllll]
\\
{A_1}
&
{A_2}
&
{A_3}
&
{A_4}
&
&
{B_1}
&
{B_2}
&
{B_3}
&
{B_4}
}\]
\caption{The Levi graph of ${\goth M}_{(4,\varphi)}$ with $\varphi=(1)(2)(34)$.}
\label{levi:moeb_3cyc}
\end{diagram}

\begin{diagram}
\centering
\[
\xymatrixrowsep{0.5in}
\xymatrixcolsep{0.2in}
\xymatrix{%
{a_1}\ar@{-}[dr]\ar@{-}[drr]\ar@{-}[drrr]\ar@{-}[drrrrrr]
&
{a_2}\ar@{-}[dl]\ar@{-}[dr]\ar@{-}[drr]\ar@{-}[drrrr]
&
{a_3}\ar@{-}[dll]\ar@{-}[dl]\ar@{-}[dr]\ar@{-}[drrrrrr]
&
{a_4}\ar@{-}[dl]\ar@{-}[dll]\ar@{-}[dlll]\ar@{-}[drrrr]
&
&
{b_1}\ar@{-}[dr]\ar@{-}[drr]\ar@{-}[drrr]\ar@{-}[dlllll]
&
{b_2}\ar@{-}[dl]\ar@{-}[dr]\ar@{-}[drr]\ar@{-}[dlllll]
&
{b_3}\ar@{-}[dll]\ar@{-}[dl]\ar@{-}[dr]\ar@{-}[dlllll]
&
{b_4}\ar@{-}[dl]\ar@{-}[dll]\ar@{-}[dlll]\ar@{-}[dlllll]
\\
{A_1}
&
{A_2}
&
{A_3}
&
{A_4}
&
&
{B_1}
&
{B_2}
&
{B_3}
&
{B_4}
}\]
\caption{The Levi graph of ${\goth M}_{(4,\varphi)}$ with $\varphi=(12)(34)$.}
\label{levi:moeb_2cyc22}
\end{diagram}

\section{Hidden M\"obius pairs}

The goal of this section is to characterize ${\cal M}={\goth M}_{(n,\varphi)}$
that can be transformed into M\"obius pair with simplices distinct from
$S_A({\cal M})$, $S_B({\cal M})$ by a decomposition of the points or by deletion of some points and blocks. 
Informally, we say that these M\"obius pairs are {\em hidden} in ${\cal M}$.

\subsection{M\"obius $n$-pairs with the special decompositions}

Let us start with the following combinatorial observation
\begin{rem}\label{ilepar}
The M\"obius configuration ${\cal M}={\goth M}_{(4,\id)}$ can be presented in $3$ distinct ways as two mutually circumscribed simplices, such that each of them is distinct from $S_A({\cal M}), S_B({\cal M})$.
\end{rem}
One could say that there are four M\"obius $4$-pairs hidden in ${\goth M}_{(4,\id)}$. 
Let $n\geq 4$, ${\cal M}={\goth M}_{(n,\varphi)}$, and assume that it is possible to decompose the points
of $\cal M$ into two complementary and mutually inscribed simplices $S_1({\cal M})$, $S_2({\cal M})$, such that
$S_t({\cal M})\neq S_X({\cal M})$ for each $t=1,2$, $X=A,B$. Since 
this kind of decomposition is rather not evident for M\"obius $n$-pairs, it will be called a {\em special decomposition}. 
\begin{lem}\label{lem:notstable}
Let $S_1({\cal M})$, $S_2({\cal M})$ be two simplices, that arise from a special decomposition of $\cal M$.
\begin{sentences}\itemsep-2pt 
\item\label{slupek}
 For each $i\in I$, and each $t=1,2$, it is impossible to have both $B_i$,$b_i$ in $S_t({\cal M})$, or both $A_i$,$a_i$ in $S_t({\cal M})$.
\item\label{twoAtwoB}
For each $t=1,2$, blocks of $S_t({\cal M})$ are two $B$-blocks and two $A$-blocks.
\end{sentences}
\end{lem}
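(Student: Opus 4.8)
The plan is to argue entirely inside the incidence graph, using the four adjacency rules read directly off the definitions of $\lines_A$ and $\lines_B$: namely $a_j\sim A_i$ iff $j\neq i$, $a_j\sim B_i$ iff $j=\varphi(i)$, $b_j\sim A_i$ iff $j=i$, and $b_j\sim B_i$ iff $j\neq i$. I will also use throughout the defining feature of a simplex of $\cal M$: its $n$ vertices and $n$ blocks are paired by the opposite-face relation, so each vertex $v$ lies on all but one block of the simplex, that block $\bar v$ containing every vertex except $v$; in particular every $(n-1)$-subset of the vertex set must be contained in some block of $\cal M$.

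For \eqref{slupek} I will argue by contradiction, treating the case $b_i,B_i\in S_t({\cal M})$ (the case $a_i,A_i$ is the mirror image, run through the $A$-blocks instead of the $B$-blocks). Since $b_i\notin B_i$, the block $B_i$ can only be the face opposite $b_i$, so the vertex set $V_t$ of $S_t({\cal M})$ satisfies $V_t\setminus\{b_i\}\subseteq B_i=\{b_j\colon j\neq i\}\cup\{a_{\varphi(i)}\}$. Comparing sizes $n-1$ and $n$, either $V_t=B$, which gives $S_t({\cal M})=S_B({\cal M})$ against the assumption that the decomposition is special, or $V_t=\{b_j\colon j\neq k\}\cup\{a_{\varphi(i)}\}$ for a single $k\neq i$. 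In the second situation I choose an index $m\notin\{i,k\}$, available precisely because $n\geq 4$, and inspect the face obtained by deleting $b_m$: it carries $n-2\geq 2$ $b$-points together with the single $a$-point $a_{\varphi(i)}$. A one-line check against the adjacency rules shows that no block of $\cal M$ can contain such a set, since an $A$-block holds only one $b$-point while the only $B$-block through $a_{\varphi(i)}$ is $B_i$, which misses the point $b_i$ still present in the face. This violates the simplex property, and this is the step that genuinely uses $n\geq 4$ and where the real work lies.

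For \eqref{twoAtwoB} I will feed \eqref{slupek} into a counting argument. If $a_i\in S_t({\cal M})$ then $A_i\notin S_t({\cal M})$, so the block opposite $a_i$, which must miss $a_i$, is forced to be a $B$-block; dually the block opposite any $b$-point of $S_t({\cal M})$ is an $A$-block. Hence the opposite-face bijection sends the $a$-points of $S_t({\cal M})$ to its $B$-blocks and the $b$-points to its $A$-blocks, so the number of $a$-points equals the number of $B$-blocks and the number of $b$-points equals the number of $A$-blocks. Finally, were there three $a$-points, the $B$-block opposite one of them would have to contain the other two, impossible since a $B$-block carries exactly one $a$-point; thus $S_t({\cal M})$ has at most two $a$-points, and dually at most two $b$-points. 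As these numbers add to $n\geq 4$, each equals $2$, forcing $n=4$ and, through the matching of counts, exactly two $A$-blocks and two $B$-blocks. The only delicate part is organizing \eqref{slupek} cleanly; once it is established, \eqref{twoAtwoB} is pure bookkeeping driven by the fact, immediate from the definition, that every block of $\cal M$ carries exactly one point of the opposite type.
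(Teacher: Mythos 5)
Your proof is correct, and it reorganizes the argument along a genuinely different (and leaner) route than the paper's. For part \eqref{slupek} the paper also starts from the observation that $B_i$ must be the face opposite $b_i$, but it then runs a local case analysis in the Levi graph: it picks an $a$-point of $S_t({\cal M})$, necessarily $a_{\varphi(i)}$, splits according to whether the face opposite $a_{\varphi(i)}$ is $A_{\varphi(i)}$ or some $B_s$, and in each case chases adjacencies of a third point and a further block until a contradiction appears. You instead pin down the whole vertex set at once from $V_t\setminus\{b_i\}\subseteq B_i$ --- either $V_t=B$ (excluded because the decomposition is special) or $V_t$ consists of $n-1$ $b$-points together with $a_{\varphi(i)}$ --- and then exhibit an $(n-1)$-subset of $V_t$ through which no block of ${\cal M}$ passes; the case structure is more transparent. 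For part \eqref{twoAtwoB} the difference is sharper: the paper separately refutes ``exactly one $B$-block'' and ``at least three $B$-blocks'' and appeals to duality for the remaining cases, whereas you note that the opposite-face bijection must send $a$-points to $B$-blocks and $b$-points to $A$-blocks (exactly where \eqref{slupek} enters), cap each point type at two via the ``one point of the opposite type per block'' rule, and finish by counting; this yields $n=4$ and the $2{+}2$ block pattern in one stroke and covers uniformly the degenerate possibility of zero $B$-blocks, which the paper leaves to ``other cases run dually.'' Two small touch-ups: the place where $n\geq 4$ is genuinely needed in \eqref{slupek} is the inequality $n-2\geq 2$ (so that no $A$-block can accommodate the $b$-points of the deleted face), not the existence of $m\notin\{i,k\}$, which needs only $n\geq 3$; and in the first case of \eqref{slupek} you should add the one-line check that $V_t=B$ forces the faces of $S_t({\cal M})$ to be precisely the $B$-blocks (an $(n-1)$-set of $b$-points lies in no other block of ${\cal M}$), so that $S_t({\cal M})=S_B({\cal M})$ indeed follows. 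Both points are cosmetic; the proof stands.
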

\begin{proof}
The proof involves only $S_1({\cal M})$, since the reasoning for $S_2({\cal M})$ will be the same.

(i)
Assume that $S_1({\cal M})$ contains both of $B_i,b_i$. Then also
some $a_j$ is a point of $S_1({\cal M})$ for $j\in I$. Consider the graph ${\cal G}_{\cal M}$.
The vertices $B_i,b_i$ are not adjacent, so from \ref{rem:Levi}\eqref{one-simpl} $a_j\sim B_i$ and $j=\varphi(i)$. The unique block-vertex not adjacent to $a_j$ in ${\cal G}_{S_1({\cal M})}$ is $A_j$ or $B_s$ for some $s\neq \varphi^{-1}(j)$.
\newline
Let $A_j$ be this vertex, so from \ref{rem:Levi}\eqref{two-simpl} $A_j\sim b_i$, and thus $j=i$. Consider in ${\cal G}_{S_1({\cal M})}$ another vertex $a_t$ or $b_t$ with $t\neq i$.
Since $\varphi(i)=i\neq t$, a contradiction arises: $b_t\nsim A_i$, and $a_t\nsim B_i$ (see the scheme presented in Figure \ref{levi:slupek1}).   

\begin{minipage}[m]{0.4\textwidth}
\begin{diagram}
\begin{center}
\xymatrix{%
{b_i}\ar@{--}[d]\ar@{-}[dr]
&
{a_i=a_{\varphi(i)}}\ar@{--}[d]\ar@{-}[dl]
&
{a_t}\ar@{--}[dll]
&
{b_t}\ar@{--}[dll]
\\
{B_i}
&
{A_i}
}
\end{center}
\caption{The fragment of ${\cal G}_{f(\cal M)}$ containing $B_i,b_i$ and $A_i,a_i$.}
\label{levi:slupek1}
\end{diagram}
\end{minipage}
\hfil
\begin{minipage}[m]{0.4\textwidth}
\begin{diagram}
\begin{center}
\xymatrix{%
{b_i}\ar@{--}[d]\ar@{-}[dr]\ar@{-}[drr]
&
{a_{\varphi(i)}}\ar@{--}[d]\ar@{-}[dl]\ar@{-}[dr]
&
{b_r}\ar@{--}[d]\ar@{-}[dll]\ar@{-}[dl]
\\
{B_i}
&
{B_s}
&
{A_i}
}
\end{center}
\caption{The fragment of ${\cal G}_{f(\cal M)}$ containing $B_i,b_i$ and $B_s,a_{\varphi(i)}$.}
\label{levi:slupek2}
\end{diagram}
\end{minipage}

\bigskip

\noindent
Assume that $s\neq \varphi^{-1}(j)$ and $B_s$ is the unique vertex not adjacent to $a_j$ in ${\cal G}_{S_1({\cal M})}$. 
We get $s\neq i$, as far as $b_i\sim B_s$. Let us take another vertex: $A_t$ or $B_t$. For $t\neq i$ there is no $B$-vertex adjacent to $a_j$, and $A_t\sim a_j,b_i$ if $t=i$.
A vertex, which is not adjacent to $A_i$, is $a_i$ or $b_r$ with $r\neq i,s$. The vertex $a_i$ is not adjacent to $B_i$ since $\varphi(i)=j\neq i$, and thus $a_i$ cannot be the vertex in question. Consequently, this vertex is $b_r\sim B_i,B_s$. Following the assumption $n\geq 4$, there exists another block in $S_1({\cal M})$, that is different from $B_i,B_s,A_i$.
We have two $b$-points in $S_1({\cal M})$ so far, thus this block is a $B$-block. The $B$-vertex of ${\cal G}_{\cal M}$, that is associated with this block, must be adjacent to
$a_{\varphi(i)}$. So this block is $B_i$, which is already one of the blocks in $S_1({\cal M})$ (comp. with the scheme presented in Figure \ref{levi:slupek2}), a contradiction.

(ii)
Let $B_i$ be the unique $B$-block of $S_1({\cal M})$ for some $i\in I$.
Then the remaining blocks of $S_1({\cal M})$ are $A$-blocks. In view of lemma \ref{lem:notstable}\eqref{slupek}, there are $n-1$ $b$-vertices in ${\cal G}_{S_1({\cal M})}$: every $A$-vertex is associated with the $b$-vertex, which is not adjacent to it. For $n\geq 4$ a contradiction with \ref{rem:Levi}\eqref{one-simpl} arises: every $b$-vertex is adjacent to precisely one of $A$-vertices, and thus it is not adjacent to at least two $A$-vertices in ${\cal G}_{S_1({\cal M})}$. 
\newline
Let $S_1({\cal M})$ contain at least three $B$-blocks. Without loss of generality, assume $B_1,B_2,B_3$ are blocks of $S_1({\cal M})$.
From \ref{lem:notstable}\eqref{slupek}, $b_1,b_2,b_3$ are not in $S_1({\cal M})$. Thus, from \ref{rem:Levi}\eqref{one-simpl}, 
$S_1({\cal M})$ contains $a_{i_1},a_{i_2},a_{i_3}$ such that $i_j\neq \varphi(j)$ for $j=1,2,3$. Every block-vertex $B_j$ must be adjacent to at least two of the point-vertices $a_{i_{j'}}$ with $j'\neq j$. On the other hand, it is adjacent to at most one of them, what follows from \ref{rem:Levi}\eqref{two-simpl} applied to ${\cal G}_{\cal M}$. This contradiction actually completes the proof as other cases run dually.
\end{proof}
By \ref{lem:notstable} we prove a generalization of \ref{ilepar}.
\begin{prop}\label{nonoriginal}
Let ${\cal M}={\goth M}_{(n,\varphi)}$.
The following conditions are equivalent
\begin{sentences}\itemsep-2pt
\item
 there is a special decomposition of $\cal M$ , 
\item
$n=4$ and there is $X\subset I$, such that $|X|=2$ and $\varphi(X)=X$.
\end{sentences}
\end{prop}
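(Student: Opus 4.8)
The plan is to prove the two implications separately, using Lemma~\ref{lem:notstable} to force the shape of a special decomposition in direction (i)$\Rightarrow$(ii), and an explicit construction for (ii)$\Rightarrow$(i). The first thing to notice is that $n=4$ comes for free: each $S_t({\cal M})$ is an $n$-simplex and so has exactly $n$ blocks, while Lemma~\ref{lem:notstable}\eqref{twoAtwoB} says these blocks split into two $A$-blocks and two $B$-blocks; hence $n=2+2=4$. So all the content of the forward direction is the production of the invariant $2$-set, and all the content of the converse is checking one concrete candidate.

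For (i)$\Rightarrow$(ii), I would write the $A$-blocks of $S_1({\cal M})$ as $A_i,A_j$ and its $B$-blocks as $B_k,B_l$. By Lemma~\ref{lem:notstable}\eqref{slupek} none of $a_i,a_j,b_k,b_l$ lies in $S_1({\cal M})$, and since $S_1({\cal M})$ has four points while only the $a_m$ with $m\in I\setminus\{i,j\}$ and the $b_m$ with $m\in I\setminus\{k,l\}$ are still available, the point set of $S_1({\cal M})$ is forced to be exactly these four. Now I would read off, twice, the defining simplex property that every face carries $n-1=3$ of its vertices. The block $A_i=(A\setminus\{a_i\})\cup\{b_i\}$ already contains both $a$-points of $S_1({\cal M})$, so it must contain exactly one of the two $b$-points, which forces $b_i\in S_1({\cal M})$, i.e. $i\notin\{k,l\}$; the same for $A_j$ yields $\{i,j\}=I\setminus\{k,l\}$. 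Turning to $B_k=(B\setminus\{b_k\})\cup\{a_{\varphi(k)}\}$, which already contains both $b$-points of $S_1({\cal M})$, the face condition forces its unique $a$-point $a_{\varphi(k)}$ into $S_1({\cal M})$, i.e. $\varphi(k)\in I\setminus\{i,j\}=\{k,l\}$; likewise $\varphi(l)\in\{k,l\}$. Thus $X:=\{k,l\}$ has $|X|=2$ and $\varphi(X)\subseteq X$, whence $\varphi(X)=X$ as $\varphi$ is a bijection.

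For (ii)$\Rightarrow$(i), given $n=4$ and $X=\{c,d\}$ with $\varphi(X)=X$, put $X^{c}=I\setminus X=\{e,f\}$ and let $S_1({\cal M})$ have points $\{a_c,a_d,b_e,b_f\}$ and blocks $\{A_e,A_f,B_c,B_d\}$, while $S_2({\cal M})$ has points $\{a_e,a_f,b_c,b_d\}$ and blocks $\{A_c,A_d,B_e,B_f\}$. These are complementary in both points and blocks and plainly differ from $S_A({\cal M}),S_B({\cal M})$. Using $\varphi(X)=X$ (equivalently $\varphi(X^{c})=X^{c}$) one verifies directly that each listed block meets its own simplex in three points and the other simplex in exactly one; for instance $B_c=\{b_d,b_e,b_f,a_{\varphi(c)}\}$ contributes $b_e,b_f,a_{\varphi(c)}$ (here $\varphi(c)\in\{c,d\}$) to $S_1({\cal M})$ and only $b_d$ to $S_2({\cal M})$. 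So $S_1({\cal M}),S_2({\cal M})$ are mutually inscribed simplices, i.e. a special decomposition; this also recovers Remark~\ref{ilepar}, since for $\varphi=\id$ every $2$-subset is invariant and the unordered pairs $\{X,X^{c}\}$ give precisely three decompositions. The main obstacle is the bookkeeping in the forward direction: once Lemma~\ref{lem:notstable} fixes the block types and thereby the point set, the entire conclusion hinges on applying the ``$n-1$ vertices per face'' constraint first to the two $A$-blocks (to see that the $B$-block indices are the complement of the $A$-block indices) and then to the two $B$-blocks (to convert the analogous constraint into the invariance $\varphi(\{k,l\})=\{k,l\}$); the converse is then a routine verification.
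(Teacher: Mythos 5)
Your proof is correct and follows essentially the same route as the paper: both directions rest on Lemma~\ref{lem:notstable} to force $n=4$ and the two-$A$/two-$B$ block split, after which incidence bookkeeping extracts the invariant $2$-set, and your converse construction (points $a_c,a_d,b_e,b_f$ with blocks $A_e,A_f,B_c,B_d$, together with its complement) is exactly the pair of simplices the paper exhibits. The only cosmetic difference is that the paper phrases the forward bookkeeping via Levi-graph adjacencies, while you argue directly with point sets and the ``three vertices per face'' property; the content is the same.
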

\begin{proof}
(i)$\Rightarrow$(ii):
From \ref{lem:notstable}\eqref{twoAtwoB} we get $n=4$, and two $B$-vertices and two $A$-vertices in
${\cal G}_{S_1({\cal M})}$. Let (e.g.) $B_1,B_2$ be the $B$-vertices of ${\cal G}_{S_1({\cal M})}$. 
In view of \ref{rem:Levi}\eqref{one-simpl}, there are vertices $x,y$ in ${\cal G}_{S_1({\cal M})}$ such that $x\sim B_1, y\sim B_2$ and $x\nsim B_2, y\nsim B_1$. By \ref{lem:notstable}\eqref{slupek}, $x\neq b_2$, $y\neq b_2$, and thus 
$x=a_i$, $y=a_j$ where $\varphi(1)=i,\varphi(2)=j$. Then two $A$-vertices in  ${\cal G}_{S_1({\cal M})}$  are 
$A_s,A_t$ with $s,t\neq i,j$. The remaining two point-vertices must be of the form $b_{s'},b_{t'}$ with $s',t'\neq 1,2$, since they must be adjacent to both of $B_1,B_2$. On the other hand, $b_{s'},b_{t'}$ need to be adjacent to precisely one of $A_s,A_t$, so $\{s',t'\}=\{s,t\}$. Thus $s,t\neq 1,2$, $\{1,2\}=\{i,j\}=\{\varphi(1),\varphi(2)\}$, and $X=\{1,2\}$ is the required set.

(ii)$\Rightarrow$(i):
Assume, without loss of generality, $X=\{1,2\}$ and consider ${\cal M}={\goth M}_{(4,\varphi)}$ with $\varphi(X)=X$. Take blocks $B_1,B_2,A_3,A_4$ and points $a_{\varphi(1)},a_{\varphi(2)},b_3,b_4$ of $\cal M$, and consider ${\cal G}_{\cal M}$.
We have $B_1\nsim a_{\varphi(2)}$, $B_2\nsim a_{\varphi(1)}$, and $B_1,B_2\sim b_3,b_4$. Similarly
$A_3\nsim b_4$, $A_4\nsim b_3$, and $A_3,A_4\sim a_{\varphi(1)},a_{\varphi(2)}$,
since $\varphi(1),\varphi(2)\in\{1,2\}$. Thus the Levi graph we consider is a Levi graph of a $4$-simplex. It is easy to verify that $A_1,A_2,B_3,B_4$ and
$b_1,b_2,a_3,a_4$ form another $4$-simplex. The two obtained simplices are mutually circumscribed. Indeed, $B_1,b_2$; $B_2,b_1$; $A_3,a_4$; $A_4,a_3$, and $A_1,a_{\varphi(1)}$ (or $A_1,a_{\varphi(2)}$); $A_2,a_{\varphi(2)}$
(or $A_2,a_{\varphi(1)}$); $B_3,b_4$; $B_4,b_3$ are all pairs of adjacent vertices representing blocks (points) of the first simplex and points (blocks) of the second simplex in each pair. In other words, we have found a special decomposition of $\cal M$.
\end{proof}
Due to \ref{nonoriginal} there is a correspondence between the special decompositions of ${\goth M}_{(n,\varphi)}$
and $2$-subsets of $I$ preserved by $\varphi$. The correspondence is established up to complements, since the
special decompositions arise only for $n=4$, and thus if $\varphi$ preserves a $2$-subset of $\{1,2,3,4\}$ then
it preserves its complement as well. So, directly from \ref{nonoriginal} we get
\begin{cor}\label{cor:all-s-labile}
All (up to an isomorphism) M\"obius $n$-pairs with a special decomposition are the following:
\begin{enumerate}\itemsep-2pt
\item
 ${\goth M}_{(4,\id)}$ with $3$ distinct special decompositions associated with \\
$X=\{1,2\},\{1,3\},\{1,4\}$,
\item
 ${\goth M}_{(4,(13)(24))}$ with the special decomposition associated with $X=\{1,3\}$,
\item
 ${\goth M}_{(4,(12)(3)(4))}$ with the special decomposition associated with $X=\{1,2\}$.
\end{enumerate}
\end{cor}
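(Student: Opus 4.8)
The plan is to read the classification straight off Proposition~\ref{nonoriginal} together with the complement correspondence recorded immediately after it. By Proposition~\ref{nonoriginal} a special decomposition exists exactly when $n=4$ and some $2$-subset $X\subset I$ satisfies $\varphi(X)=X$, so the whole problem reduces to analysing which $\varphi\in S_4$ fix a $2$-subset setwise and then counting the associated decompositions. First I would observe that if $\varphi(X)=X$ with $|X|=2$, then $\varphi$ restricts to a permutation of $X$ and, automatically, of its two-element complement $I\setminus X$; each of these restrictions is either the identity or a transposition. Hence every admissible $\varphi$ is one of $\id$, a single transposition, or a double transposition, and conversely each of these fixes a suitable $2$-subset. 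These three possibilities are precisely the conjugacy classes of $S_4$ that meet the set of $2$-subset-preserving permutations.

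Next I would pass to isomorphism classes. The elementary fact I need is that the index relabelling $F\colon x_i\mapsto x_{\sigma(i)}$ (for $x\in\{a,b,A,B\}$) is an isomorphism ${\goth M}_{(4,\varphi)}\to{\goth M}_{(4,\sigma\varphi\sigma^{-1})}$; this is a short direct check on the two block families $\lines_A,\lines_B$, and it is the same mechanism already used in the proof of Fact~\ref{dualdes}. Consequently conjugate permutations give isomorphic pairs, so up to isomorphism the only candidates are the three representatives listed, namely ${\goth M}_{(4,\id)}$, ${\goth M}_{(4,(12)(3)(4))}$ and ${\goth M}_{(4,(13)(24))}$.

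To count the special decompositions attached to each, I would invoke the remark following Proposition~\ref{nonoriginal}: special decompositions correspond to preserved $2$-subsets taken up to complementation. The identity fixes all six $2$-subsets, which fall into the three complementary pairs $\{1,2\}/\{3,4\}$, $\{1,3\}/\{2,4\}$, $\{1,4\}/\{2,3\}$, yielding three decompositions represented by $X=\{1,2\},\{1,3\},\{1,4\}$; the transposition $(12)$ fixes only $\{1,2\}$ and its complement $\{3,4\}$, and $(13)(24)$ fixes only $\{1,3\}$ and $\{2,4\}$, so each of the latter two carries a single decomposition, associated with $X=\{1,2\}$ and $X=\{1,3\}$ respectively. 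This reproduces the three cases in the statement and recovers Remark~\ref{ilepar} as the first of them.

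The one genuinely non-formal point, and the step I expect to need the most care, is verifying that the three representatives are pairwise non-isomorphic, so that the list is irredundant. Here I would use an isomorphism invariant supplied by Proposition~\ref{lem:blockcap}: by case~\eqref{case:zero} the number of \emph{disjoint} pairs of blocks equals the number of fixed points of $\varphi$, which is $4$, $2$, $0$ for $\id$, $(12)(3)(4)$, $(13)(24)$. Since any isomorphism preserves disjointness of blocks, these counts are distinct invariants and the three pairs are non-isomorphic. (The coarser invariant ``number of special decompositions'' already separates ${\goth M}_{(4,\id)}$ from the other two, but not the transposition case from the double-transposition case, which is why the block-intersection count of Proposition~\ref{lem:blockcap} is the essential tool.)
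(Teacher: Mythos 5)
Your proposal is correct, and its core is the same as the paper's: the corollary is read off Proposition \ref{nonoriginal} together with the remark that special decompositions correspond to $\varphi$-invariant $2$-subsets taken up to complementation, and your enumeration of the admissible classes ($\id$, a single transposition, a double transposition) with their complementary pairs ($3$, $1$, $1$ respectively) is exactly what the paper compresses into ``directly from \ref{nonoriginal} we get''. The one place you genuinely diverge is the irredundancy step: the paper does not argue non-isomorphism at all, tacitly relying on the correspondence between M\"obius $4$-pairs and conjugacy classes of $S_4$ (cited as well known in the introduction, and only formalized later in Theorem \ref{conclass:strong}), whereas you distinguish the three representatives by a self-contained invariant, namely the number of disjoint block pairs, which by Proposition \ref{lem:blockcap}\eqref{case:zero} equals the number of fixed points of $\varphi$ ($4$, $2$, $0$ for the three cases). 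This buys you a corollary that is independent of the later isomorphism theorem and of any outside reference, at the cost of a slightly longer argument; your parenthetical observation is also apt, since the count of special decompositions alone would not separate ${\goth M}_{(4,(12)(3)(4))}$ from ${\goth M}_{(4,(13)(24))}$.
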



\subsection{Subpairs of M\"obius $n$-pairs}
Let ${\cal M}={\goth M}_{(n,\varphi)}$, $n\geq 4$, 
$k\geq 3$, $k<n$, and ${\cal M}'$ be a M\"obius $k$-pair obtained from ${\cal M}$ by deleting $2(n-k)$ points and $2(n-k)$ blocks. We call ${\cal M}'$ a {\em $k$-subpair} of ${\cal M}$.
Blocks of ${\cal M}'$ are {\em subblocks of $\cal M$}, that is every
block of ${\cal M}'$ arises as a block of $\cal M$ with $n-k$ points removed. Subblocks of
$A$-blocks, $B$-blocks are called {\em $A$-subblocks}, {\em $B$-subblocks}, respectively. 
Let $S_1({\cal M}')$, $S_2({\cal M}')$ be two simplices of ${\cal M}'$. For any $t=1,2$, $X=A,B$ we write $S_t({\cal M}')\prec S_X({\cal M})$ if all points and blocks of $S_t({\cal M}')$ are points and subblocks of $S_X({\cal M})$.
Otherwise we write $S_i({\cal M}')\nprec S_X({\cal M})$.
\par
In order to determine all M\"obius $n$-pairs with $k$-subpairs we need to prove some auxiliary facts.
\begin{lem}\label{lem:tylkob}
One of the following conditions holds
\begin{sentences}\itemsep-2pt
\item\label{case1}
$S_1({\cal M}')\prec S_A({\cal M})$ and $S_2({\cal M}')\prec S_B({\cal M})$,
\item\label{case11}
$S_2({\cal M}')\prec S_A({\cal M})$ and $S_1({\cal M}')\prec S_B({\cal M})$,
\item\label{case2}
$S_1({\cal M}')\nprec S_A({\cal M}), S_B({\cal M})$ and $S_2({\cal M}')\nprec S_A({\cal M}), S_B({\cal M})$.
\end{sentences}
Moreover, if $\cal M'$ satisfies \eqref{case2} then there is a special decomposition of $\cal M'$. 
\end{lem}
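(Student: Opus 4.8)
The plan is to work entirely in the Levi graph and to play two ingredients against each other: the explicit adjacency pattern of ${\cal M}$ and Remark~\ref{rem:Levi} applied to the subpair ${\cal M}'$. Recall from the definition that in ${\cal G}_{\cal M}$ each $a$-point lies on every $A$-block but one and on exactly one $B$-block, and dually each $b$-point lies on every $B$-block but one and on exactly one $A$-block; the dual statements for the points contained in a block hold as well (an $A$-block contains exactly one $b$-point, a $B$-block exactly one $a$-point). On the other hand, by Remark~\ref{rem:Levi} applied to ${\cal M}'$, inside one simplex $S_t({\cal M}')$ every point misses exactly one of its $k$ blocks, while every block of $S_t({\cal M}')$ meets exactly one point of the other simplex. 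Since ${\cal M}'$ is a substructure of ${\cal M}$, adjacency in ${\cal G}_{{\cal M}'}$ is the restriction of adjacency in ${\cal G}_{\cal M}$, so these two pictures can be combined.

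The core step is the dichotomy: if $S_1({\cal M}')\prec S_A({\cal M})$ then $S_2({\cal M}')\prec S_B({\cal M})$, together with its three symmetric versions. So assume all $k$ points of $S_1({\cal M}')$ are $a$-points and all its blocks are $A$-subblocks. First I would show that no $A$-subblock can belong to $S_2({\cal M}')$: an $A$-subblock $A_m'$ consists of the surviving points of $A_m$, hence it still contains every $a$-point of $S_1({\cal M}')$ except possibly $a_m$, that is at least $k-1\geq 2$ points of $S_1({\cal M}')$, contradicting that a block of $S_2({\cal M}')$ meets exactly one point of $S_1({\cal M}')$ by Remark~\ref{rem:Levi}\eqref{two-simpl}. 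Thus all $k$ blocks of $S_2({\cal M}')$ are $B$-subblocks. Now $S_2({\cal M}')$ can contain no $a$-point, since an $a$-point lies on at most one $B$-block of ${\cal M}$, hence on at most one of these $k\geq 3$ $B$-subblocks, whereas inside its own simplex it would have to lie on $k-1\geq 2$ of them. Hence $S_2({\cal M}')\prec S_B({\cal M})$. This establishes \eqref{case1} and \eqref{case11}, and when neither simplex is pure we are in \eqref{case2}.

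For the \emph{moreover} clause I would first pin down the shape of a mixed simplex. Using the sparse half of the adjacency pattern exactly as above, a simplex containing an $a$-point has at most two $B$-subblocks, a simplex containing a $b$-point has at most two $A$-subblocks, and dually a simplex containing an $A$-subblock has at most two $b$-points and one containing a $B$-subblock at most two $a$-points. A mixed simplex cannot consist of $a$-points only: being non-pure it would then contain a $B$-subblock, forcing at most two $a$-points, impossible for $k\geq 3$; likewise it cannot consist of $b$-points only. So a mixed simplex carries both an $a$-point and a $b$-point, whence all four bounds apply and force $k=4$ together with the balanced signature --- two $A$-subblocks, two $B$-subblocks, two $a$-points and two $b$-points in each of $S_1({\cal M}')$, $S_2({\cal M}')$. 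This is precisely the signature that, via Lemma~\ref{lem:notstable}\eqref{twoAtwoB} and Proposition~\ref{nonoriginal}, marks a special decomposition, and it differs from the pure-block signature of the canonical simplices $S_A({\cal M}')$, $S_B({\cal M}')$; hence $S_1({\cal M}')$, $S_2({\cal M}')$ is a special decomposition of ${\cal M}'$. The step I expect to be most delicate is exactly this last matching: controlling which points survive in each subblock so that the incidence counts are exact, and identifying the intrinsic canonical simplices of ${\cal M}'$ accurately enough to be certain the balanced decomposition is genuinely distinct from them.
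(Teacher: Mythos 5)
Your trichotomy argument is correct and is essentially the paper's own proof: both play the adjacency pattern of ${\cal G}_{\cal M}$ against Remark \ref{rem:Levi} applied to ${\cal G}_{{\cal M}'}$, and your two-step organization (first no $A$-subblock in $S_2({\cal M}')$, then no $a$-point) is just a streamlined version of the paper's contradiction.

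The ``moreover'' clause, however, has a genuine gap at the words ``force $k=4$''. Your four bounds, together with their duals, give for a mixed simplex: at least one and at most two blocks of each kind, and at least one and at most two points of each kind. That yields only $3\le k\le 4$, not $k=4$: the signature ``two $A$-subblocks, one $B$-subblock, two $a$-points, one $b$-point'' satisfies every one of your inequalities. Worse, no counting of this sort can close the gap, because the $k=3$ case actually occurs. In ${\goth M}_{(3,(123))}$ (so $\varphi(1)=2$, $\varphi(2)=3$, $\varphi(3)=1$) the two triangles with points $\{a_1,a_3,b_3\}$ and blocks $\{A_2,A_3,B_2\}$, respectively points $\{a_2,b_1,b_2\}$ and blocks $\{A_1,B_1,B_3\}$, are complementary and mutually inscribed --- each block contains exactly two points of its own triangle and one point of the other, as one checks directly from $A_2=\{a_1,a_3,b_2\}$, $A_3=\{a_1,a_2,b_3\}$, $B_2=\{b_1,b_3,a_3\}$, $A_1=\{a_2,a_3,b_1\}$, $B_1=\{b_2,b_3,a_2\}$, $B_3=\{b_1,b_2,a_1\}$. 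Such an ${\cal M}'$ arises as a $3$-subpair of any ${\goth M}_{(n,\varphi)}$ whose $\varphi$ contains a $3$-cycle, and it satisfies \eqref{case2} with exactly the mixed signature above. The paper does not argue via these coarse bounds: it reruns the whole proof of Lemma \ref{lem:notstable} inside ${\cal M}'$, and the decisive step there is the exclusion of a simplex with \emph{exactly one} $B$-block --- each of the $k-1$ $A$-blocks forces a distinct $b$-point into the simplex, and such a $b$-point, lying on at most one $A$-block of ${\cal M}$, would have to miss at least $k-2$ blocks of its own simplex, a contradiction only when $k-2\ge 2$. That finer argument is what pins the signature to two-and-two, and it is also precisely where $k\ge 4$ enters; for $k=3$ it fails (the paper is silent on this case, since Lemma \ref{lem:notstable} is stated under $n\ge 4$), but your deduction claims to extract $k=4$ from estimates that demonstrably do not imply it.

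A second, smaller gap is the one you flag yourself: to conclude that the balanced decomposition is special \emph{for ${\cal M}'$}, you must know that ${\cal M}'$ also carries the pure decomposition, i.e.\ that its points and blocks are exactly those of ${\cal M}$ labelled by a $\varphi$-invariant $4$-set $Y$; otherwise ``the canonical simplices $S_A({\cal M}'),S_B({\cal M}')$'' need not be pure relative to ${\cal M}$, and the signature comparison proves nothing. The paper supplies this by repeating the first part of the proof of Proposition \ref{nonoriginal} (obtaining $Y$ and a $2$-subset $X\subset Y$ with $\varphi\ciach{Y}(X)=X$) before invoking \ref{nonoriginal}; your proposal asserts this identification rather than proving it.
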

\begin{proof}
Let $S_1({\cal M}')\prec S_A({\cal M})$ and $S_2({\cal M}')\nprec S_B({\cal M})$.
So there is an $a$-point or $A$-subblock in $S_2({\cal M}')$. We consider only the case with an $a$-point, as the case with an $A$-subblock is symmetric.
From \ref{rem:Levi}\eqref{two-simpl} applied to ${\cal G}_{\cal M}$, and \ref{rem:Levi}\eqref{one-simpl}
applied to ${\cal G}_{{\cal M}'}$, there are at most two $B$-subblocks in $S_2({\cal M}')$. 
Since $k\geq 3$, there is at least one $A$-subblock in $S_2({\cal M}')$. 
Note that the unique $A$-subblock, which does not contain an $a$-point of $S_1({\cal M}')$, 
is the block of $S_1({\cal M}')$. 
Thus all points of $S_1({\cal M})$ are in an $A$-subblock of $S_2({\cal M}')$. This yields a contradiction with \ref{rem:Levi}\eqref{two-simpl}.
 The proof for each of the remaining cases (i.e. $S_2({\cal M}')\prec S_A({\cal M})$ and $S_1({\cal M}')\nprec S_B({\cal M})$, $S_1({\cal M}')\prec S_B({\cal M})$ and $S_2({\cal M}')\nprec S_A({\cal M})$, or $S_2({\cal M}')\prec S_B({\cal M})$ and $S_1({\cal M}')\nprec S_A({\cal M})$) is analogous. 
\par 
Let $\cal M'$ satisfy \eqref{case2}. The steps of the proof of \ref{lem:notstable} can be repeated for simplices
of $\cal M'$. As a result we get $k=4$, and two $A$-subblocks and two $B$-subblocks in each of simplices
of $\cal M'$. Let $Y\subset I$ be the set of subscripts of $A$-subblocks and $B$-subblocks in one of these simplices. 
From the reasoning analogous to the first part of the proof of \ref{nonoriginal} we get that
$Y$ is the set of all subscripts used for labelling points and blocks of $\cal M'$, and there is a
two-element set $X\subset Y$ such that $\varphi\ciach{Y}(X)=X$.
Therefore, in a view of \ref{nonoriginal}, there is a special decomposition of $\cal M'$.
\end{proof}
\begin{lem}\label{fixedset}
If the number of deleted $B$-blocks and the number of deleted $A$-blocks coincide (and equals $n-k$),
then there is $X\subset I$, such that $|X|=n-k$ and $\varphi(X)=X$.
\end{lem}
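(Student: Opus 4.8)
The plan is to pin down the index sets of the surviving blocks and points and to show they force $\varphi$ to fix a $k$-element set, whose complement is the required $X$. Write $P,Q\subseteq I$ for the subscripts of the retained $A$-blocks and $B$-blocks, so that by hypothesis $|P|=|Q|=k$ and the deleted $A$- and $B$-blocks are indexed by $I\setminus P$ and $I\setminus Q$, each of size $n-k$. Write $R,T\subseteq I$ for the subscripts of the retained $a$-points and $b$-points, so $|R|+|T|=2k$. I would then invoke \ref{lem:tylkob}, which leaves three cases to treat.

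In the aligned cases \eqref{case1} and \eqref{case11} (say \eqref{case1}: $S_1({\cal M}')\prec S_A({\cal M})$ and $S_2({\cal M}')\prec S_B({\cal M})$) the $k$ blocks of $S_1({\cal M}')$ are exactly the retained $A$-subblocks and its $k$ points are $a$-points, and dually for $S_2({\cal M}')$; hence $|R|=|T|=k$. Now each retained $A$-subblock $A_i$ is a face of the simplex $S_1({\cal M}')$: it omits a retained vertex $a_i$ (forcing $i\in R$) and carries its inscribed point $b_i$ (forcing $i\in T$), so $P=R=T$. Treating the $B$-subblocks dually gives $Q=T$ together with $\varphi(Q)=R$, since the inscribed points $a_{\varphi(i)}$ must survive. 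Combining, $Z:=P=Q=R=T$ and $\varphi(Z)=Z$.

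The case \eqref{case2} is the real obstacle, and I would handle it as follows. Repeating for the simplices of ${\cal M}'$ the argument of \ref{lem:notstable} together with the first part of \ref{nonoriginal} (exactly as in the proof of \ref{lem:tylkob}) yields $k=4$, two $a$-points, two $b$-points, two $A$-subblocks and two $B$-subblocks in each simplex, and, crucially, that every surviving block and point carries a subscript lying in one common four-element set $Y$. Since $|P|=|Q|=|R|=|T|=4$ and all four are contained in $Y$ with $|Y|=4$, they coincide: $P=Q=R=T=Y$. Unlike the aligned case the $A$-subblocks need not line up with the $a$-points here, so no decomposition can be read off directly; instead I argue by a size count. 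Each retained $B$-block $B_i$ ($i\in Y$) already contains the three retained $b$-points $b_j$ with $j\in Y\setminus\{i\}$, so to reach the block size $k=4$ it must also retain its inscribed point $a_{\varphi(i)}$, whence $\varphi(i)\in R=Y$. As $i\in Y$ was arbitrary, $\varphi(Y)=Y$, and we put $Z=Y$.

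In every case this produces a set $Z$ with $|Z|=k$ and $\varphi(Z)=Z$; since $\varphi$ is a bijection, $\varphi(I\setminus Z)=I\setminus Z$, so $X:=I\setminus Z$ satisfies $|X|=n-k$ and $\varphi(X)=X$, as required. The delicate step is the mixed case \eqref{case2}: there the $A$-subblocks do not align with the $a$-points, so the collapse of all four index sets onto the single set $Y$, rather than any alignment of simplices, is what carries the argument, and the equal-deletion hypothesis enters already at the outset by guaranteeing $|P|=|Q|=k$.
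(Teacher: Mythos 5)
Your proof is correct, and it is essentially the mirror image of the paper's argument: the paper tracks the \emph{deleted} objects, while you track the \emph{retained} ones. The paper fixes the deleted blocks $B_{i_1},\ldots,B_{i_{n-k}}$ and $A_{j_1},\ldots,A_{j_{n-k}}$ and shows that the points $a_{\varphi(i_s)}$, $a_{j_s}$, $b_{j_s}$, $b_{i_s}$ must all be deleted as well: a surviving such point would need an adjacent $B$-vertex (resp.\ $A$-vertex) in ${\cal G}_{{\cal M}'}$, which is ruled out in cases \ref{lem:tylkob}\eqref{case1}, \ref{lem:tylkob}\eqref{case11} directly, and in case \ref{lem:tylkob}\eqref{case2} via \ref{nonoriginal}; since exactly $2(n-k)$ points are deleted, a cardinality count then forces $\{\varphi(i_1),\ldots,\varphi(i_{n-k})\}=\{j_1,\ldots,j_{n-k}\}=\{i_1,\ldots,i_{n-k}\}$, and $X$ is read off as the set of deleted indices itself. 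You instead prove the retention-side implications (a surviving block forces its omitted vertex and its inscribed point to survive), collapse the four retained index sets onto a single $\varphi$-invariant $k$-set $Z$, and take $X=I\setminus Z$; both routes rest on \ref{lem:tylkob} plus the unique-adjacency properties of \ref{rem:Levi}, so they cost about the same. The one substantive divergence is case \ref{lem:tylkob}\eqref{case2}: the paper closes it abstractly (the special decomposition would give the orphaned $a$-point a $B$-neighbour, a contradiction), whereas you re-derive the common four-element subscript set $Y$ and finish with a block-size count showing $\varphi(Y)\subseteq Y$. That is a clean and arguably more self-contained finish, but note that the fact you lean on (that $Y$ carries \emph{all} labels of points and blocks of ${\cal M}'$) is established only inside the proof of \ref{lem:tylkob}, not in its statement, so in a write-up you would either have to repeat that derivation, as you indicate, or strengthen the statement of \ref{lem:tylkob} accordingly. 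A minor further remark: in all three cases the equal numbers of retained $A$- and $B$-subblocks actually follow from \ref{lem:tylkob} itself, so the equal-deletion hypothesis is less essential to your argument than your closing sentence suggests.
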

\begin{proof}
Assume that $B_{i_1},\ldots,B_{i_{n-k}}$ and $A_{j_1},\ldots,A_{j_{n-k}}$ are removed blocks.
Consider a vertex $a_{\varphi(i_s)}$ with
$s=1,\ldots,n-k$ of ${\cal G}_{\cal M'}$, and assume $a_{\varphi(i_s)}$ is in ${\cal G}_{S_1(\cal M')}$ (the case with
$a_{\varphi(i_s)}$ in ${\cal G}_{S_2(\cal M')}$ will be analogous). 
Note that $a_{\varphi(i_s)}\sim B_{i_s}$, and from \ref{rem:Levi}\eqref{two-simpl} $B_{i_s}$ is the unique $B$-vertex
adjacent to $a_{\varphi(i_s)}$.
According to \ref{lem:tylkob} two cases arise: \eqref{case1} or \eqref{case2} holds for
${\cal M}'$.
Let $\cal M'$ satisfy \eqref{case1} of \ref{lem:tylkob}. 
Then there is a $B$-vertex in ${\cal G}_{S_2(\cal M')}$ adjacent to $a_{\varphi(i_s)}$, a contradiction.
If $\cal M'$ satisfies \eqref{case2} of \ref{lem:tylkob} then there is a special decomposition of $\cal M'$.
So, by \ref{nonoriginal}, there is
a $B$-vertex in ${\cal G}_{S_1(\cal M')}$ adjacent to $a_{\varphi(i_s)}$, a contradiction again.
Therefore all $a_{\varphi(i_1)}\ldots,a_{\varphi(i_{n-k})}$ are removed. 
Likewise we consider the pairs $a_{j_s},A_{j_s}$, $b_{j_s},A_{j_s}$, and
$b_{i_s}, B_{i_s}$. Each of these reasonings leads us to contradiction.
Consequently
points $a_{j_1}\ldots,a_{j_{n-k}}$, $b_{j_1}\ldots,b_{j_{n-k}}$, and $b_{i_1}\ldots,b_{i_{n-k}}$
are deleted as well.
Hence
\begin{cmath}
 \{j_1,\ldots,j_{n-k}\}=\{\varphi(i_1),\ldots,\varphi(i_{n-k})\} \text{ and } \{i_1,\ldots,i_{n-k}\}=\{j_1,\ldots,j_{n-k}\}.
\end{cmath}
Finally we get $\{\varphi(i_1),\ldots,\varphi(i_{n-k})\}=\{i_1,\ldots,i_{n-k}\}$, and $X=\{i_1,\ldots,i_{n-k}\}$ is the set from our claim.
\end{proof}
Let us present a condition, which is sufficient and necessary to find a $k$-subpair in ${\goth M}_{(n,\varphi)}$.
\begin{prop}\label{pairnested}
Let ${\cal M}={\goth M}_{(n,\varphi)}$.
The following conditions are equivalent
\begin{sentences}\itemsep-2pt
\item
there is ${\cal M'}$, which is a $k$-subpair of $\cal M$,
\item\label{pairnested:2}
there is $X\subset I$, such that $|X|=n-k$ and $\varphi(X)=X$.
\end{sentences}
Furthermore, if \eqref{pairnested:2} holds then ${\cal M'}\cong {\goth M}_{(k,\varphi\ciach{(I\setminus X)})}$. 
\end{prop}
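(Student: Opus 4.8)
The plan is to prove the two implications separately and to read the isomorphism ${\cal M}'\cong{\goth M}_{(k,\varphi\ciach{(I\setminus X)})}$ directly off the construction used for the easy direction. Throughout I write $Y:=I\setminus X$, and I note at once that if $\varphi(X)=X$ then, $\varphi$ being a bijection of the finite set $I$, also $\varphi(Y)=Y$; hence $\varphi\ciach{Y}$ is a genuine permutation of the $k$-element set $Y$ and ${\goth M}_{(k,\varphi\ciach{Y})}$ is well defined (up to relabelling $Y$ by $\{1,\ldots,k\}$).

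For \eqref{pairnested:2}$\Rightarrow$(i), together with the final claim, I would delete from $\cal M$ exactly the $2(n-k)$ points $\{a_i,b_i\colon i\in X\}$ and the $2(n-k)$ blocks $\{A_i,B_i\colon i\in X\}$. For $i\in Y$ the surviving subblocks are then $\{a_j\colon j\in Y\setminus\{i\}\}\cup\{b_i\}$ (from $A_i$) and $\{b_j\colon j\in Y\setminus\{i\}\}\cup\{a_{\varphi(i)}\}$ (from $B_i$), where $\varphi(i)\in Y$ precisely because $\varphi(Y)=Y$, so no surviving block loses its extra point. Comparing these with the defining sets $\lines_A,\lines_B$ of the M\"obius $k$-pair built on the index set $Y$ with the permutation $\varphi\ciach{Y}$, the label-preserving map $x_i\mapsto x_i$ (for $x\in\{a,b,A,B\}$, $i\in Y$) carries the surviving structure onto ${\goth M}_{(k,\varphi\ciach{Y})}$. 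Each surviving subblock has exactly $k$ points and there are $2k$ of them on $2k$ points, so the result is a $(2k_k)$-configuration, i.e.\ a $k$-subpair, and the isomorphism is exactly the one asserted.

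For (i)$\Rightarrow$\eqref{pairnested:2} I would reduce matters to the hypothesis of Lemma~\ref{fixedset}, namely that the number of deleted $A$-blocks equals the number of deleted $B$-blocks. Since every block of ${\cal M}'$ is either an $A$-subblock or a $B$-subblock and there are $2k$ of them, it suffices to show that ${\cal M}'$ retains exactly $k$ of each type. Here I would invoke the trichotomy of Lemma~\ref{lem:tylkob}. In cases \eqref{case1} and \eqref{case11} each simplex $S_t({\cal M}')$ lies below a single $S_X({\cal M})$, hence consists of $k$ $A$-subblocks or of $k$ $B$-subblocks, giving the balanced $k+k$ split at once. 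In case \eqref{case2} Lemma~\ref{lem:tylkob} furnishes a special decomposition of ${\cal M}'$, whence by Proposition~\ref{nonoriginal} one has $k=4$ and each simplex carries two $A$-subblocks and two $B$-subblocks, again totalling $4+4$. In every case the deleted $A$-blocks and $B$-blocks both number $n-k$, so the hypothesis of Lemma~\ref{fixedset} is met and it produces $X\subset I$ with $|X|=n-k$ and $\varphi(X)=X$; moreover the proof of Lemma~\ref{fixedset} identifies the deleted indices as exactly this $X$ for all four families, which reconciles the present direction with the construction above and hence with the isomorphism claim.

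The main obstacle is precisely this passage to Lemma~\ref{fixedset}: one must exclude the possibility that ${\cal M}'$ keeps unequal numbers of $A$- and $B$-subblocks, and the delicate point is case \eqref{case2}, where the two simplices of ${\cal M}'$ need not respect the $A/B$ partition of $\cal M$. That case is exactly what the special-decomposition analysis of Lemma~\ref{lem:tylkob} and Proposition~\ref{nonoriginal} controls: it pins $k=4$ and forces the balanced $2+2$ split inside each simplex, restoring the equality of counts. Once that equality is secured, both implications and the identification of ${\cal M}'$ follow routinely.
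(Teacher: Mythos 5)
Your proposal is correct and takes essentially the same route as the paper: for (i)$\Rightarrow$(ii) both arguments run the trichotomy of Lemma~\ref{lem:tylkob} and extract the invariant set from Lemma~\ref{fixedset}, with Proposition~\ref{nonoriginal} controlling the exceptional case~\eqref{case2}, and for (ii)$\Rightarrow$(i) both delete exactly the points and blocks indexed by $X$ and identify the remainder with ${\goth M}_{(k,\varphi\ciach{(I\setminus X)})}$. The only cosmetic differences are that you funnel case~\eqref{case2} through Lemma~\ref{fixedset} by first noting the balanced $2+2$ split of subblocks (where the paper appeals to Proposition~\ref{nonoriginal} directly), and that you verify the surviving blocks set-theoretically where the paper counts vertex ranks in the Levi graph.
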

\begin{proof}
(i)$\Rightarrow$(ii):
By \ref{lem:tylkob} $\cal M'$ satisfies one of \ref{lem:tylkob}\eqref{case1} -- \ref{lem:tylkob}\eqref{case2}.
In cases \ref{lem:tylkob}\eqref{case1}, \ref{lem:tylkob}\eqref{case11} the numbers of $A$-blocks and $B$-blocks deleted from $\cal M$ coincide and are equal $n-k$. The claim follows directly from \ref{fixedset}.
If \ref{lem:tylkob}\eqref{case2} holds, then there is a special decomposition of $\cal M'$, and we get our claim by \ref{nonoriginal}.

(ii)$\Rightarrow$(i):
Without any loss of generality, let $X=\{1,\ldots,n-k\}$.
Recall that the rank of every vertex in ${\cal G}_{\cal M}$ is $n$.
Observe the Levi graph obtained from ${\cal G}_{\cal M}$ by removing the  
vertices $a_i, A_i$ and $b_i,B_i$ for every $i\in X$, and all edges passing through these vertices. We denote 
this Levi graph by ${\cal H}$. Note that $a_{\varphi(i)}$ is not a vertex of $\cal H$, since $\varphi(i)\in X$.
Let $j\notin X$ and take $A_j$. Clearly $A_j$ is a vertex of $\cal H$.
There are $n-k$ edges joining $A_j$ with all $a_i$ in ${\cal G}_{\cal M}$. 
Thus, the rank of $A_j$ in $\cal H$ is $n-(n-k)=k$. Similarly we set ranks of the remaining vertices $a_j,b_j,B_j$ of
$\cal H$. All these ranks are $k$. From this and the construction of ${\cal H}$ we get that
${\cal H}$ is the Levi graph of two mutually circumscribed $k$-simplices, where the way they are inscribed one into another is induced by the action of $\varphi$ on the set $I \setminus X$. Therefore ${\cal H}={\cal G}_{\cal M'}$
for some $\cal M'$, which is a $k$-subpair of $\cal M$.
\end{proof}
%
%


\section{Isomorphisms and automorphisms}

\subsection{Isomorphic M\"obius $n$-pairs}

Recall that the M\"obius $(8_4)$ configurations (i.e. M\"obius $4$-pairs) correspond to conjugacy classes of the permutation group $S_4$. In this section we generalize this property to all M\"obius $n$-pairs.
\par
Let us start with a key lemma that gives an account on isomorphisms of configurations ${\goth M}_{(n,\varphi)}$ with the unique decomposition into two $n$-simplices.
\begin{lem}\label{lem:isoform}
Let $f$ be an isomorphism mapping ${\goth M}_{(n,\varphi)}$ onto ${\goth M}_{(n,\psi)}$.
Assume that either
$n=4$ and both $\varphi,\psi\neq \id$ contain no cycle of length $2$, or
 $n\geq 5$. 
There is $\alpha\in S_n$ such that $f(B_i)=B_{\alpha(i)}$ for each $i\in I$, or $f(B_i)=A_{\alpha(i)}$ for each $i\in I$.
\begin{sentences}\itemsep-2pt
\item\label{lem:isoform:i}
If $f(B_i)=B_{\alpha(i)}$ then $f(b_i)=b_{\alpha(i)}$, $f(A_i)=A_{\alpha(i)}$, $f(a_i)=a_{\alpha(i)}$ for each $i\in I$.
\item\label{lem:isoform:ii}
If
$f(B_i)=A_{\alpha(i)}$ then $f(b_i)=a_{\alpha(i)}$, $f(A_i)=B_{\psi^{-1}(\alpha(i))}$, $f(a_i)=b_{\psi^{-1}(\alpha(i))}$
for each $i\in I$.
\end{sentences}
Furthermore, $\alpha\varphi=\psi\alpha$ holds in both cases: \eqref{lem:isoform:i} and \eqref{lem:isoform:ii}. 
\end{lem}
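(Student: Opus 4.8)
The plan is to pass to the Levi graph and to exploit that, under the stated hypotheses, the decomposition of a M\"obius $n$-pair into two $n$-simplices is unique. Since $f$ is an isomorphism of configurations, it induces an isomorphism ${\cal G}_{{\goth M}_{(n,\varphi)}}\to{\cal G}_{{\goth M}_{(n,\psi)}}$ carrying point-vertices to point-vertices and block-vertices to block-vertices. Being incidence-theoretic, the property of being a decomposition into two mutually inscribed simplices is isomorphism-invariant, so $f$ sends the standard decomposition $\{S_A({\goth M}_{(n,\varphi)}),S_B({\goth M}_{(n,\varphi)})\}$ to a decomposition of ${\goth M}_{(n,\psi)}$ of the same kind. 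The hypotheses on $n,\varphi,\psi$ are exactly those that, by \ref{cor:all-s-labile}, rule out special decompositions of ${\goth M}_{(n,\psi)}$; hence the only decompositions of the target are standard, and $\{f(S_A),f(S_B)\}=\{S_A({\goth M}_{(n,\psi)}),S_B({\goth M}_{(n,\psi)})\}$. Because an $A$-block carries $n-1$ $a$-points and a single $b$-point while a $B$-block carries a single $a$-point and $n-1$ $b$-points, and $n-1\geq 2$ for $n\geq 3$, the block types are determined by their point content; thus the behaviour of $f$ on point types forces the behaviour on block types. Consequently $f$ either (a) preserves all four types, or (b) swaps $a$-points with $b$-points and $A$-blocks with $B$-blocks. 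Defining $\alpha$ by $f(B_i)=B_{\alpha(i)}$ in case (a) and $f(B_i)=A_{\alpha(i)}$ in case (b) gives $\alpha\in S_n$, since $f$ is a bijection on the block family in question.

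Next I would read off the remaining images from four ``unique element'' descriptions that hold in every M\"obius $n$-pair and are immediate from \ref{rem:Levi}: $b_i$ is the only $b$-point missing from $B_i$; $A_i$ is the only $A$-block through $b_i$; $a_i$ is the only $a$-point missing from $A_i$; and $a_{\varphi(i)}$ is the only $a$-point on $B_i$. In case (a), transporting ``the $b$-point missing from $B_i$'' gives $f(b_i)=b_{\alpha(i)}$; transporting ``the $A$-block through $b_i$'' gives $f(A_i)=A_{\alpha(i)}$; and transporting ``the $a$-point missing from $A_i$'' gives $f(a_i)=a_{\alpha(i)}$. For the commutation relation I would transport the incidence $a_{\varphi(i)}\in B_i$: its image $f(a_{\varphi(i)})=a_{\alpha\varphi(i)}$ must be the unique $a$-point on $f(B_i)=B_{\alpha(i)}$, namely $a_{\psi\alpha(i)}$, whence $\alpha\varphi=\psi\alpha$.

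Case (b) runs identically but with the types swapped, which is what produces the $\psi^{-1}$ twist. Here $f(b_i)$ is the unique $a$-point missing from $f(B_i)=A_{\alpha(i)}$, so $f(b_i)=a_{\alpha(i)}$; then $f(A_i)$ is the unique $B$-block through $a_{\alpha(i)}$, and since $a_j$ lies on $B_m$ exactly when $\psi(m)=j$, this forces $f(A_i)=B_{\psi^{-1}(\alpha(i))}$; finally $f(a_i)$ is the unique $b$-point missing from $f(A_i)$, i.e.\ $f(a_i)=b_{\psi^{-1}(\alpha(i))}$. Transporting $a_{\varphi(i)}\in B_i$ once more, its image $b_{\psi^{-1}(\alpha\varphi(i))}$ must be the unique $b$-point on $A_{\alpha(i)}$, namely $b_{\alpha(i)}$, giving $\psi^{-1}\alpha\varphi=\alpha$, i.e.\ again $\alpha\varphi=\psi\alpha$.

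The main obstacle is the first paragraph: one must rule out that $f$ mixes $a$-points with $b$-points, and this is exactly where the hypothesis is used. For $n\geq 5$ the block bipartition is already visible intrinsically from \ref{lem:blockcap}, since same-type blocks meet in $n-2\geq 3$ points whereas blocks of different type meet in at most $2$; but for $n=4$ one has $n-2=2$, the intersection numbers no longer separate the two classes, and \ref{cor:all-s-labile} shows the conclusion genuinely fails for $\id$, $(13)(24)$ and $(12)(3)(4)$ --- precisely the permutations barred by the cycle-type assumption. Once the type behaviour of $f$ is pinned down, the rest is the bookkeeping carried out above.
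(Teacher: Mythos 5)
Your proposal is correct and follows essentially the same route as the paper's proof: both use \ref{cor:all-s-labile} to exclude special decompositions of the target, forcing $f$ to preserve or interchange the two standard simplices, and both then pin down $f(b_i)$, $f(A_i)$, $f(a_i)$ via the same unique (non-)adjacency facts in the Levi graphs, obtaining $\alpha\varphi=\psi\alpha$ by transporting the incidence $a_{\varphi(i)}\in B_i$. Your side remark that for $n\geq 5$ the block bipartition is already visible from the intersection numbers of \ref{lem:blockcap} is a nice addition, but it plays no role beyond what the decomposition argument already gives.
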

\begin{proof}
Let ${\cal M}_1:={\goth M}_{(n,\varphi)}$ and ${\cal M}_2:={\goth M}_{(n,\psi)}$. 
Let $i,j\in I$ and $B_i$ be an arbitrary
$B$-block of ${\cal M}_1$. Clearly, either $f(B_i)=B_j$ for some $B$-block $B_j$ of ${\cal M}_2$, or $f(B_i)=A_j$ for some  $A$-block $A_j$ of ${\cal M}_2$. 
\par
Assume that $f(B_i)=B_j$.
In view of \ref{cor:all-s-labile}, both ${\cal M}_1$, ${\cal M}_2$ are M\"obius $n$-pairs without the special decompositions. Thus all $B$-blocks of ${\cal M}_1$ are mapped onto $B$-blocks of ${\cal M}_2$. 
We introduce a map $\alpha\in S_n$ associated 
with $f$ by the formula
\begin{ctext}
 $\alpha\colon i\mapsto j$ iff $f(B_i)=B_j$, 
\end{ctext}
for all $i,j\in I$.
Then $f(B_i)=B_{\alpha(i)}$.
Let us analyze graphs ${\cal G}_{{\cal M}_1}$ and
${\cal G}_{{\cal M}_2}$:
$f(b_i)=b_{\alpha(i)}$ as $b_i$, $b_{\alpha(i)}$ are the unique $b$-vertices not adjacent to $B_i$, $B_{\alpha(i)}$ 
respectively in graphs ${\cal G}_{{\cal M}_1}$, ${\cal G}_{{\cal M}_2}$;
$f(A_i)=A_{\alpha(i)}$ as $A_i$, $A_{\alpha(i)}$ are the unique $A$-vertices adjacent to $b_i$, $b_{\alpha(i)}$ 
respectively in ${\cal G}_{{\cal M}_1}$, ${\cal G}_{{\cal M}_2}$;
$f(a_i)=a_{\alpha(i)}$ as $a_i$, $a_{\alpha(i)}$ are the unique $a$-vertices not adjacent to $A_i$, $A_{\alpha(i)}$ 
respectively in ${\cal G}_{{\cal M}_1}$, ${\cal G}_{{\cal M}_2}$.
On the other hand, $f(a_{\varphi(i)})=a_{\psi(\alpha(i))}$ as $a_{\varphi(i)}$, $a_{\psi(\alpha(i))}$ are the unique $a$-vertices adjacent to $B_i$, $B_{\alpha(i)}$ respectively in ${\cal G}_{{\cal M}_1}$, ${\cal G}_{{\cal M}_2}$.
So $a_{\alpha(\varphi(i))}=a_{\psi(\alpha(i))}$ and thus $\alpha\varphi=\psi\alpha$. 
\par 
In case $f(B_i)=A_j$ the map
$\alpha\in S_n$ is determined by the condition 
\begin{ctext}
$\alpha\colon i\mapsto j$ iff $f(B_i)=A_j$,
\end{ctext}
for all $i,j\in I$. Then
we proceed in a similar way as in the former case, namely:
$f(b_i)=a_{\alpha(i)}$ as $b_i$, $a_{\alpha(i)}$ are the unique $b$-vertex and $a$-vertex not adjacent to $B_i$, $A_{\alpha(i)}$ respectively in ${\cal G}_{{\cal M}_1}$, ${\cal G}_{{\cal M}_2}$;
$f(A_i)=B_{\psi^{-1}(\alpha(i))}$ as $A_i$, $B_{\psi^{-1}(\alpha(i))}$ are the unique $A$-vertex and $B$-vertex adjacent to $b_i$, $a_{\alpha(i)}$ respectively in ${\cal G}_{{\cal M}_1}$, ${\cal G}_{{\cal M}_2}$;
$f(a_i)=b_{\psi^{-1}(\alpha(i))}$ as $a_i$, $b_{\psi^{-1}(\alpha(i))}$ are the unique $a$-vertex and $b$-vertex not adjacent to $A_i$, $B_{\psi^{-1}(\alpha(i))}$ respectively in ${\cal G}_{{\cal M}_1}$, ${\cal G}_{{\cal M}_2}$.
But also $f(a_{\varphi(i)})=b_{\alpha(i)}$ as $a_{\varphi(i)}$, $b_{\alpha(i)}$ are the unique $a$-vertex and $b$-vertex
adjacent to $B_i$, $A_{\alpha(i)}$ respectively in ${\cal G}_{{\cal M}_1}$, ${\cal G}_{{\cal M}_2}$. Hence
$b_{\psi^{-1}(\alpha(\varphi(i)))}=b_{\alpha(i)}$, and consequently $\alpha\varphi=\psi\alpha$.
\end{proof}
We are ready to characterize two isomorphic M\"obius $n$-pairs.  
\begin{thm}\label{conclass:strong}
Let $n\geq 4$ and $\varphi,\psi,\alpha\in S_n$. The following conditions are equivalent:
\begin{sentences}\itemsep-2pt
\item
$\varphi^\alpha=\psi$,
\item
${\goth M}_{(n,\varphi)}\cong {\goth M}_{(n,\psi)}$.
\end{sentences}
\end{thm}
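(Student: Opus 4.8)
The plan is to prove the two implications separately: construct an explicit isomorphism for (i)$\Rightarrow$(ii), and recover the conjugacy class of the defining permutation from the incidence structure for (ii)$\Rightarrow$(i), using Lemma~\ref{lem:isoform} wherever it is available and isomorphism invariants elsewhere. Throughout I read the conjugacy relation $\varphi^\alpha=\psi$ in the form $\alpha\varphi=\psi\alpha$ (with the opposite convention one simply replaces $\alpha$ by $\alpha^{-1}$, which changes nothing).

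For (i)$\Rightarrow$(ii), assume $\alpha\varphi=\psi\alpha$. The natural candidate is the map $f$ permuting indices by $\alpha$ uniformly across the four kinds of objects, $f(x_i)=x_{\alpha(i)}$ for $x\in\{a,b,A,B\}$ and $i\in I$. Since the $A$-blocks $A_i=(A\setminus\{a_i\})\cup\{b_i\}$ do not involve the permutation, one sees at once that $f(A_i)$ is the corresponding $A$-block $A_{\alpha(i)}$ of ${\goth M}_{(n,\psi)}$. For the $B$-blocks, $B_i=(B\setminus\{b_i\})\cup\{a_{\varphi(i)}\}$ is carried to $(B\setminus\{b_{\alpha(i)}\})\cup\{a_{\alpha\varphi(i)}\}$, and this is exactly the $B$-block of ${\goth M}_{(n,\psi)}$ missing $b_{\alpha(i)}$ precisely because $\alpha\varphi(i)=\psi\alpha(i)$ — here, and only here, the conjugacy hypothesis is used. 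As $f$ is a bijection taking blocks to blocks and respecting membership, it is the required isomorphism.

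For (ii)$\Rightarrow$(i), let $f\colon{\goth M}_{(n,\varphi)}\to{\goth M}_{(n,\psi)}$ be an isomorphism. In the generic range — $n\geq 5$, or $n=4$ with neither $\varphi$ nor $\psi$ equal to $\id$ or containing a $2$-cycle — Lemma~\ref{lem:isoform} directly produces $\alpha\in S_n$ with $\alpha\varphi=\psi\alpha$, whence $\psi=\varphi^\alpha$ and (i) follows. I expect the main obstacle to be the excluded $n=4$ cases, namely the three classes admitting a special decomposition (Corollary~\ref{cor:all-s-labile}), where uniqueness of the simplex decomposition fails and the lemma's $A$-versus-$B$ bookkeeping is unavailable. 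For these I would argue by isomorphism invariants instead of by the form of $f$. By Proposition~\ref{lem:blockcap}\eqref{case:zero}, and since for $n\geq 4$ two blocks of the same kind always meet in $n-2\geq 2$ points, the only disjoint pairs of blocks are the pairs $\{A_i,B_i\}$ with $\varphi(i)=i$; hence the number of disjoint block-pairs is an isomorphism invariant equal to the number of fixed points of $\varphi$, and this already separates the classes $1^4,\,2\cdot1^2,\,3\cdot1$ (fixed-point counts $4,2,1$). To separate the two fixed-point-free classes $2^2$ and $4$, I invoke Fact~\ref{fact:cycles}: a $4$-cycle yields a closed chain of eight blocks with consecutive single-point intersections, whereas $(12)(34)$ yields only such chains of length four, and $f$ preserves all block-intersection cardinalities and hence these chains. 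Thus the cycle type of $\varphi$ is in every case an invariant of ${\goth M}_{(n,\varphi)}$, so an isomorphism forces $\varphi$ and $\psi$ to share a cycle type and therefore to be conjugate, giving (i). In fact this invariant argument, combining Proposition~\ref{lem:blockcap} for fixed points with Fact~\ref{fact:cycles} for longer cycles, proves (ii)$\Rightarrow$(i) uniformly for all $n\geq 4$ without any appeal to Lemma~\ref{lem:isoform}.
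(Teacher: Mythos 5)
Your proof of (i)$\Rightarrow$(ii) is the same as the paper's: the index map $x_i\mapsto x_{\alpha(i)}$, with conjugacy used exactly once to match $a_{\alpha\varphi(i)}$ with the point $a_{\psi\alpha(i)}$ required on the image $B$-block. For (ii)$\Rightarrow$(i) your treatment of $n\geq 5$ via Lemma~\ref{lem:isoform} is also the paper's argument. The genuine difference is at $n=4$: the paper simply declares this case ``well known'' (the classical correspondence between the $(8_4)$ configurations and conjugacy classes of $S_4$, cited in the introduction), whereas you prove it inside the paper's own framework, using two isomorphism invariants: the number of disjoint block pairs, which by Proposition~\ref{lem:blockcap}\eqref{case:zero} equals the number of fixed points of $\varphi$ (same-type blocks can never be disjoint since $n-2\geq 2$), and the existence of closed block chains of length $2k$ with consecutive single-point intersections, which by Fact~\ref{fact:cycles} detects the presence of a $k$-cycle. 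These two invariants do separate the five conjugacy classes of $S_4$ ($1^4$, $2\cdot 1^2$, $3\cdot 1$ have fixed-point counts $4,2,1$; the fixed-point-free classes $2^2$ and $4$ are separated by the chain of length $8$). This is a sound and self-contained replacement for the paper's external citation, and it correctly covers precisely the $n=4$ cases excluded from Lemma~\ref{lem:isoform}.

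Your closing claim, however, is false: the invariant argument does \emph{not} prove (ii)$\Rightarrow$(i) uniformly for all $n\geq 4$. The point is that Fact~\ref{fact:cycles} is an existence statement; your invariants record only the \emph{set} of cycle lengths of $\varphi$ together with its number of fixed points, not the multiplicities of the cycles, and for large $n$ this data does not determine the conjugacy class. For instance, at $n=10$ take $\varphi$ of cycle type $4+4+2$ and $\psi$ of cycle type $4+2+2+2$: both are fixed-point free and both have cycle-length set $\{2,4\}$, so every invariant you use takes the same value on ${\goth M}_{(10,\varphi)}$ and ${\goth M}_{(10,\psi)}$, yet $\varphi$ and $\psi$ are not conjugate. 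So your argument cannot rule out an isomorphism between these two configurations, and the appeal to Lemma~\ref{lem:isoform} for $n\geq 5$ remains essential (one could instead try to \emph{count} the closed chains of length $2k$ to recover multiplicities, but you do not do this, and the Fact as stated does not provide it). Since your actual proof of the theorem invokes the lemma for $n\geq 5$ and uses the invariants only at $n=4$, this overreach does not invalidate the proof; the final sentence should simply be deleted or downgraded to the $n=4$ statement it actually supports.
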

\begin{proof}
Let ${\cal M}_1={\goth M}_{(n,\varphi)}$ and ${\cal M}_2={\goth M}_{(n,\psi)}$.

(i)$\Rightarrow$(ii):
Let $i\in I$, $a_i,b_i$ be points and $A_i,B_i$ be blocks of ${\cal M}_1$.
Consider a map $f$ associated to the permutation $\alpha$ given by the formula
\begin{ctext}
$f(x_i) = x_{\alpha(i)}$ for $x\in\{a,b\}$,
\end{ctext}
which maps the points of ${\cal M}_1$ onto the points of ${\cal M}_2$.
Then $f(A_i)=A_{\alpha(i)}$ and $f(B_i)=B_{\alpha(i)}$, as the conditions $a_i\notin A_i$, $b_i\notin B_i$ uniquely determine blocks $A_i$, $B_i$, respectively. Clearly, conditions $b_i\in A_i$ and $b_{\alpha(i)}\in A_{\alpha(i)}$ are 
equivalent.
Note that $a_{\alpha(\varphi(i))}\in B_{\alpha(i)}$ is equivalent to $a_{\psi(\alpha(i))}\in B_{\alpha(i)}$ as well, since
$\alpha\varphi=\psi\alpha$. Thus $f$ is the required isomorphism.

(ii)$\Rightarrow$(i): 
We restrict ourselves to $n\geq 5$ since for $n=4$ this fact is well known, as it was mentioned at the beginning of
this section.
Let $f$ be an isomorphism mapping ${\cal M}_1$ onto ${\cal M}_2$.
By \ref{lem:isoform}, there is $\alpha\in S_n$ associated with $f$ such that $\alpha\varphi=\psi\alpha$.
%
\end{proof}
According to \ref{conclass:strong}, the number of non-isomorphic configurations ${\goth M}_{(n,\varphi)}$ is equal to the number of partitions $p(n)$ of a positive integer $n$. There is the generating function, recursive formula, asymptotic formula, and direct formula for $p(n)$ (cf. \cite{andrews}). The increase of $n$ implies quick growth of $p(n)$: $p(5) = 7$, $p(6) = 11$,\ldots,
$p(100) = 190569292$,\ldots, $p(1000)=24061467864032622473692149727991$.


\subsection{The automorphism group structure of a M\"obius $n$-pair}

For $n=3$ the structure ${\goth M}_{(n,\varphi)}$ consists of two simultaneously inscribed and described triangles. From
\cite{petel} the automorphism group of ${\goth M}_{(3,\varphi)}$ is isomorphic to
$S_3\ltimes C_2$.
From the original paper of M\"obius \cite{moebius} the automorphism group of
${\goth M}_{(4,\id)}$ has order $192$. The M\"obius configuration is also a
particular case of the Cox configuration. Recall the definition of the Cox configuration. Let $X$ be a set with $n$ elements. The incidence structure
\begin{ctext}
$(\mathbf{Cx})_X=(\mathbf{Cx})_n=\struct{\bigcup\{{\sub}_{2k+1}(X)\colon 0\leq k\leq n\}, \{{\sub}_{2k}(X)\colon 0\leq k\leq n\}, \subset \cup \supset}$
\end{ctext}
is the ${(2^{n-1}}_n)$ configuration, which is called the Cox configuration.
Since the automorphism group of $(\mathbf{Cx})_n$
is established in \cite{cox} and ${\goth M}_{(4,\id)}=(\mathbf{Cx})_4$ (see Figure \ref{fig:coxmoeb}), we get the following.
\begin{figure}[!h]
  \begin{center}
    \includegraphics[scale=0.6]{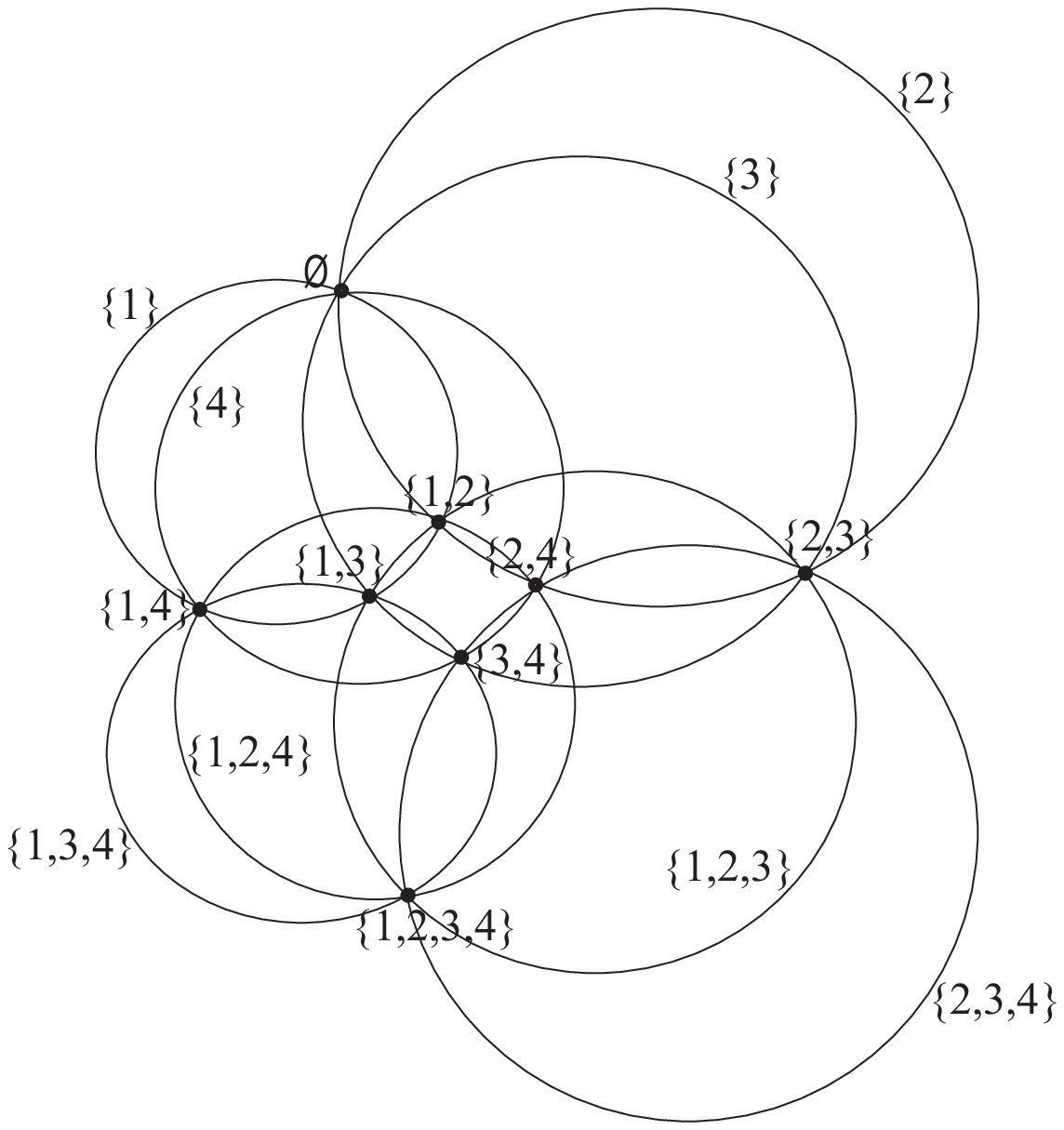}
  \end{center}
  \caption{The M\"obius configuration as $(\mathbf{Cx})_4$.}
  \label{fig:coxmoeb}
\end{figure}
\begin{fact}\label{autofmoebius}
The automorphism group of ${\goth M}_{(4,\id)}$ is isomorphic to  $S_4 \ltimes C_2^{3}$.
\end{fact}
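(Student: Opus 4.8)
The plan is to deduce the statement from the already-noted identification ${\goth M}_{(4,\id)}=(\mathbf{Cx})_4$ (Figure~\ref{fig:coxmoeb}) together with the known automorphism group of the Cox configuration, rather than from Lemma~\ref{lem:isoform}; indeed, that lemma was deliberately formulated to exclude the pair $n=4$, $\varphi=\id$, so it supplies no information here. First I would make the identification explicit as a bijection between the points and blocks of ${\goth M}_{(4,\id)}$ and the odd- and even-cardinality subsets of a $4$-element set $X$, carrying the incidence of ${\goth M}_{(4,\id)}$ onto that of $(\mathbf{Cx})_4$. Once this dictionary is fixed one has $\Aut\big({\goth M}_{(4,\id)}\big)=\Aut\big((\mathbf{Cx})_4\big)$ literally, and the statement becomes the case $n=4$ of the computation carried out in \cite{cox}.

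The second step is to identify the group and verify its order. Two evident families of automorphisms of $(\mathbf{Cx})_n$ present themselves: the $n!$ permutations of the ground set $X$, which act on subsets preserving cardinality-parity and hence give a copy of $S_n$; and the symmetric differences $S\mapsto S\triangle T$ by a fixed even subset $T$, which again preserve parity and so the point/block split, giving the elementary abelian group $C_2^{n-1}$ of even subsets of $X$ under $\triangle$. The permutations normalise the translation part, so together they generate $S_n\ltimes C_2^{n-1}$; by \cite{cox} this is the whole automorphism group. For $n=4$ the order is $24\cdot 8=192$, matching the value attributed to M\"obius in \cite{moebius}, and specialising yields $S_4\ltimes C_2^{3}$.

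I expect the substantive point --- and the reason the earlier rigidity results do not apply --- to be the completeness of this list, namely that there are no further automorphisms. For $n\geq 5$, Lemma~\ref{lem:isoform} forces every automorphism of ${\goth M}_{(n,\id)}$ to respect the decomposition $\{S_A({\cal M}),S_B({\cal M})\}$ up to swapping the two simplices, so that $\Aut\big({\goth M}_{(n,\id)}\big)$ is the centraliser of $\id$ (all of $S_n$) extended by a single simplex-swapping involution, of order only $2\cdot n!$. By Corollary~\ref{cor:all-s-labile}, however, ${\goth M}_{(4,\id)}$ carries three further special decompositions, so an automorphism need not preserve $\{S_A({\cal M}),S_B({\cal M})\}$; it is exactly these extra decompositions that enlarge the $C_2$ to $C_2^{3}$ and make $n=4$, $\varphi=\id$ exceptional. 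I would therefore invoke \cite{cox} for completeness; a self-contained argument would instead track the orbit of a single block under an arbitrary automorphism, using the constraints of Remark~\ref{rem:Levi} and the three special decompositions to exhaust all $192$ possibilities, which is routine but tedious.
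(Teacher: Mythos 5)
Your proposal follows exactly the paper's own route: the paper proves this fact precisely by noting ${\goth M}_{(4,\id)}=(\mathbf{Cx})_4$ and citing \cite{cox} for the automorphism group of the Cox configuration, which is what you do. Your additional material --- exhibiting the $S_4$ of ground-set permutations and the $C_2^3$ of symmetric-difference translations, checking the order $192$ against \cite{moebius}, and explaining via \ref{cor:all-s-labile} why the special decompositions make this case fall outside \ref{lem:isoform} --- is correct and fleshes out what the paper leaves implicit in the citation.
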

%

It follows from \ref{conclass:strong} that the centralizer
of $\varphi$ in $S_n$ consists of automorphisms of ${\goth M}_{(n,\varphi)}$ for any $n$.
Nevertheless, we will give a detailed characterization of automorphism group of ${\goth M}_{(4,\varphi)}$ with $\varphi\neq\id$, and of ${\goth M}_{(n,\varphi)}$ with $n\geq 5$. 
\par
Let ${\cal M}={\goth M}_{(n,\varphi)}$ and
 $1\leq\nu_1<\ldots<\nu_r$ be the lengths of the cycles which are contained in the cycle decomposition of $\varphi\in S_n$. Assume that there are $m_t$ cycles of length $\nu_t$, so $n=\sum_{t=1}^{r}m_t\nu_t$.
In other words 
$$\varphi=\varphi^{\nu_1}_{1}\varphi^{\nu_1}_2\ldots\varphi^{\nu_1}_{m_1}\varphi^{\nu_2}_{1}\varphi^{\nu_2}_2\ldots\varphi^{\nu_2}_{m_2}\ldots
\varphi^{\nu_r}_{1}\varphi^{\nu_r}_2\ldots\varphi^{\nu_r}_{m_r},$$
where $\varphi^{\nu_t}_{k}$ is a cycle of length $\nu_t$ for $k\leq m_t$, $t\leq r$.
In view of \ref{conclass:strong} we can assume, that each cycle consists of consecutive natural numbers. 
If we set $\mu_k^t:=\sum_{i=1}^{t-1}m_i\nu_i+(k-1)\nu_t+1$ then $$\varphi^{\nu_t}_{k}\colon\mu_k^t \mapsto \mu_k^t +1 \mapsto \mu_k^t + 2\mapsto ...\mapsto \mu_k^t + (\nu_t-1) \mapsto \mu_k^t,$$
 and the effective domain of $\varphi^{\nu_t}_{k}$ is the set $X^{\nu_t}_k:=\{\mu_k^t,\mu_k^t+1 \ldots,\mu_k^t + (\nu_t-1)\}\subseteq I$.
Taking all the domains of all cycles we obtain the family of pairwise disjoint sets
$X^{\nu_1}_1,\ldots,X^{\nu_1}_{m_1},X^{\nu_2}_1,\ldots,X^{\nu_2}_{m_2},\ldots,X^{\nu_r}_1,\ldots,X^{\nu_r}_{m_r}$
that yields a covering of $I$.
Thus for any cycle $\varphi^{\nu_t}_{k}$ we have $\varphi^{\nu_t}_{k}(X^{\nu_t}_{k})=X^{\nu_t}_{k}$ and
$\varphi^{\nu_t}_{k} \ciach{I\setminus X^{\nu_t}_k}=\id$.
\par
The points and the blocks of $\cal M$ can be identified with the sequences  $(t,k,i,\varepsilon)$ such that $t\leq r$, $k\leq m_t$, $i=0,\ldots,\nu_t-1$, and $\varepsilon\in\{1,2,-1,-2\}$ according to the formula:
\begin{equation}\label{label:points}
(t,k,i,\varepsilon)=
\left\{
\begin{array}{llll}
a_{i+\mu_k^t} & \text {for } \varepsilon=1, \\
b_{i+\mu_k^t} & \text{for } \varepsilon=-1, \\
A_{i+\mu_k^t} & \text {for } \varepsilon=2, \\
B_{i+\mu_k^t} & \text {for } \varepsilon=-2.
\end{array}
\right.
\end{equation}%
Let $v_t=(v_1^t,\ldots,v^t_{m_t})\in C_{\nu_t}^{m_t}$, $\alpha_t\in S_{m_t}$, and
$v=(v_1,\ldots,v_r)\in\bigtimes_{t=1}^{r}C_{\nu_t}^{m_t}$, 
$\alpha=(\alpha_1,\ldots,\alpha_r)\in\bigtimes_{t=1}^{r}S_{m_t}$.
With the pair
$(v,\alpha)$ we associate the map $f_{(v,\alpha)}$ as follows:
\begin{equation}\label{map:pres}
f_{(v,\alpha)}((t,k,i,\varepsilon))=
(t,\alpha_t(k), i+v_k^t\mod\nu_t,\varepsilon).
\end{equation}
In like manner we define the map $g_{(v,\alpha)}$ by:
\begin{equation}\label{map:inter}
g_{(v,\alpha)}((t,k,i,\varepsilon))=
\left\{
\begin{array}{ll}
(t,\alpha_t(k),i+v_k^t-1\mod\nu_t,-\varepsilon) & \text {for } \varepsilon=1,2, \\
(t,\alpha_t(k),i+v_k^t\mod\nu_t,-\varepsilon) & \text {for } \varepsilon=-1,-2.
\end{array}
\right.
\end{equation}
\begin{lem}\label{presautomorf}
The map $f_{(v,\alpha)}$ is an automorphism of $\cal M$, which preserves each of simplices
$S_A$, $S_B$.
\end{lem}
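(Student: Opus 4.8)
The claim is that $f_{(v,\alpha)}$ defined by \eqref{map:pres} is an automorphism of $\cal M$ preserving each simplex. My plan is to verify directly from the formula that $f_{(v,\alpha)}$ is a bijection on the points and blocks, that it respects the incidence relation in both directions, and that it maps $A$-points/$A$-blocks to $A$-points/$A$-blocks and likewise for $B$ (which is the ``preserves each simplex'' part). The sign coordinate $\varepsilon$ is left untouched by \eqref{map:pres}, so the simplex-preservation is immediate: since $\varepsilon\in\{1,2\}$ labels $a$-points and $A$-blocks while $\varepsilon\in\{-1,-2\}$ labels $b$-points and $B$-blocks, $f_{(v,\alpha)}$ never mixes the two simplices. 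So the real content is bijectivity and incidence-preservation.

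First I would establish that $f_{(v,\alpha)}$ is a bijection. For each fixed cycle-length index $t$, the map acts on the pair $(k,i)$ by $k\mapsto\alpha_t(k)$ and $i\mapsto i+v_k^t \bmod \nu_t$. Since $\alpha_t\in S_{m_t}$ is a permutation and, for each fixed $k$, translation by $v_k^t$ is a bijection of $\mathbb{Z}/\nu_t\mathbb{Z}$, the composite is a bijection of the index set $\{(k,i)\colon k\le m_t,\ 0\le i<\nu_t\}$ for each of the four values of $\varepsilon$; taking the product over $t$ and over $\varepsilon$ gives a bijection of all points and blocks. (An explicit inverse is $g$ built from $\alpha_t^{-1}$ and translation by $-v_{\alpha_t^{-1}(k)}^t$, but invoking the finite-set injectivity-implies-bijectivity shortcut is cleaner.)

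The main step is incidence-preservation. By Remark~\ref{rem:Levi} and the definition of $\lines_A,\lines_B$, the incidences of $\cal M$ are governed by three types of adjacency: within $S_A$ and within $S_B$ the rule $a_j\in A_i \iff i\ne j$ (and dually $b_j\in B_i \iff i\ne j$), and across the simplices the rules $b_i\in A_i$ and $a_{\varphi(i)}\in B_i$. Translated into the $(t,k,i,\varepsilon)$ coordinates, the intra-simplex incidences are adjacencies between two sequences differing only in sign of $\varepsilon$ (e.g. $\varepsilon=1$ versus $\varepsilon=2$) with a condition on the remaining coordinates, and the cross incidence $a_{\varphi(i)}\in B_i$ becomes, within one cycle, an incidence between consecutive residues $i$ and $i+1 \bmod\nu_t$ (since $\varphi$ is the shift $\mu_k^t+i\mapsto\mu_k^t+i+1$ on $X_k^{t}$). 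I would check that applying $f_{(v,\alpha)}$ to both members of each incidence preserves the defining condition: because $\alpha_t$ and the translation by $v_k^t$ are applied \emph{identically} to both endpoints of an adjacency (the $\varepsilon$-coordinate being fixed), all the relevant equalities and inequalities among the $(k,i)$-coordinates — ``same index'', ``distinct index'', ``consecutive index mod $\nu_t$'' — are preserved. The one point demanding care is that the cross-simplex incidence $a_{\varphi(i)}\in B_i$ ties an $a$-point to a $B$-block, i.e. relates $\varepsilon=1$ and $\varepsilon=-2$; here I must confirm that $f_{(v,\alpha)}$, which shifts the $i$-coordinate by the \emph{same} $v_k^t$ and applies the \emph{same} $\alpha_t(k)$ to both, sends the pair $(a_{\varphi(i)},B_i)$ to a pair of the same incidence type, using that translation commutes with the shift $\varphi$ and that $\alpha_t$ permutes whole cycles.

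I expect this last cross-simplex incidence to be the only real obstacle: intra-simplex incidences are trivially preserved because the coordinate transformation is the same on both endpoints, but the $a_{\varphi(i)}\in B_i$ relation mixes an $\varepsilon=1$ sequence with an $\varepsilon=-2$ sequence, and I need the translation parameter $v_k^t$ and the cycle-permutation $\alpha_t$ to act \emph{compatibly} on both so that the shift-by-one encoding of $\varphi$ is respected after applying $f_{(v,\alpha)}$. Once this compatibility is verified — essentially because $f_{(v,\alpha)}$ commutes with the action of $\varphi$ on indices within each cycle and permutes cycles of equal length as blocks — incidence-preservation in both directions follows (the reverse direction from bijectivity plus forward preservation), and the proof is complete.
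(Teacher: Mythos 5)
Your proposal is correct and follows essentially the same route as the paper's own proof: simplex preservation is read off from the fact that \eqref{map:pres} fixes $\varepsilon$, intra-simplex incidences ($b_j\in B_i$ iff $j\neq i$, etc.) reduce to injectivity of the index map, and the only substantive case is the cross incidence $a_{\varphi(i)}\in B_i$, which both you and the paper handle by noting that the translation by $v_k^t$ commutes with the shift-by-one action of $\varphi$ within each cycle while $\alpha_t$ permutes whole cycles of equal length. The paper merely executes in coordinates the same case analysis you outline (doing the two $B$-block cases explicitly and declaring the $A$-block cases symmetric), so there is no difference in substance.
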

\begin{proof}
It follows directly from \eqref{map:pres}, that
$f_{(v,\alpha)}$ maps $S_A$ onto $S_A$, and $f_{(v,\alpha)}$ maps $S_B$ onto $S_B$.
Let $i\in X^{\nu_t}_k$ and $j\in I$.
\par
Assume that $b_j\in B_i$. By \eqref{label:points},
$B_{i}=(t,k,i_0,-2)$ for some $i_0\in\{0,\ldots,\nu_t-1\}$, and $b_j=(t',k',j_0,-1)$ for some
$t'\leq r$, $k'\leq m_{t'}$, $j_0\in\{0,\ldots,\nu_{t'}-1\}$.
Then $f(B_{i})=(t,\alpha_t(k),i_0+v_{\alpha_t(k)}^t\mod\nu_t,-2)$ and $f(b_{j})=(t',\alpha_{t'}(k'),j_0+v_{\alpha_{t'}(k')}^{t'}\mod\nu_{t'},-2)$.
Set $i'=(i_0+v_{\alpha_t(k)}^t\mod\nu_t)+\mu^t_{\alpha_t(k)}$ and
$j'=(j_0+v_{\alpha_{t'}(k')}^{t'}\mod\nu_{t'})+\mu^{t'}_{\alpha_{t'}(k')}$,
so $f(B_{i})=B_{i'}$ and $f(b_{j})=B_{j'}$.
Recall that $b_j\in B_i$ iff $j\neq i$. If $j'\neq i'$ then: firstly $t'=t$, next $\alpha_t(k')=\alpha_t(k)$ and thus $k'=k$, and finally $j_0=i_0$. It means that $j=i$, which yields a contradiction.
Hence $f(b_j)\in f(B_{i})$.
\par
Let $a_j \in B_i$. Then $j=\varphi(i)$.
We have $a_{\varphi(i)}=(t,k,i_0+1\mod \nu_t,1)$, so $f(a_{\varphi(i)})=(t,k,i_0+1+v_k^t\mod \nu_t,1)=
a_{\varphi(i')}$. Therefore
$f(a_{\varphi(i)})\in f(B_{i})$. 
\par
The incidence (membership) relation
is preserved by $f_{(v,\alpha)}$ in case $a_j\in A_i$ and in case $b_j\in A_i$ as well, that can be easily proved by similar reasoning.
\end{proof}
Let $\mathbf{v}_t=(\underbrace{v,\ldots,v}_{t})$ for all $t\leq r$, and  $\mathbf{v}=(\mathbf{v}_1,\ldots,\mathbf{v}_r)$. Let us put $g_0:=g_{(\mathbf{0},\id)}$. 
\begin{lem}\label{autBontoA}
The map $g_0$ is an automorphism of $\cal M$, which interchanges simplices
$S_A$, $S_B$.
\end{lem}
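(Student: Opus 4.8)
The plan is to show that the explicitly-defined map $g_0=g_{(\mathbf{0},\id)}$ both sends $A$-objects to $B$-objects (and vice versa) and preserves the incidence relation of $\cal M$. First I would unwind the definition \eqref{map:inter} with $v=\mathbf 0$ and $\alpha=\id$, which gives $g_0((t,k,i,\varepsilon))=(t,k,i-1\bmod\nu_t,-\varepsilon)$ for $\varepsilon=1,2$ and $g_0((t,k,i,\varepsilon))=(t,k,i,-\varepsilon)$ for $\varepsilon=-1,-2$. Reading off the coding \eqref{label:points}, this means $g_0$ acts by $a_{i+\mu_k^t}\mapsto b_{i-1+\mu_k^t}$, $A_{i+\mu_k^t}\mapsto B_{i-1+\mu_k^t}$, $b_{i+\mu_k^t}\mapsto a_{i+\mu_k^t}$, and $B_{i+\mu_k^t}\mapsto A_{i+\mu_k^t}$. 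Since the sign $\varepsilon$ is negated in every case, $g_0$ carries $a$-points to $b$-points and $A$-blocks to $B$-blocks and conversely; hence it interchanges $S_A$ and $S_B$, as required. It is a bijection because on each cyclic block $X^{\nu_t}_k$ it is a composition of the obvious index shifts with the sign flip, each invertible.

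Next I would verify that $g_0$ preserves incidence, checking the four membership types exactly as in the proof of \ref{presautomorf}. The decisive cases are the ones coupling an $a$-point with a $B$-block, since those are governed by $\varphi$. Concretely, fix $i\in X^{\nu_t}_k$ and recall $a_{\varphi(i)}\in B_i$. Writing $B_i=(t,k,i_0,-2)$, one has $\varphi(i)=i+1$ within the cycle, so $a_{\varphi(i)}=(t,k,i_0+1\bmod\nu_t,1)$; applying $g_0$ gives $g_0(B_i)=(t,k,i_0,2)=A_{i+\mu_k^t}$ and $g_0(a_{\varphi(i)})=(t,k,i_0,-1)=b_{i+\mu_k^t}$. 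Thus I must confirm $b_{i+\mu_k^t}\in A_{i+\mu_k^t}$, which holds by the definition of $\lines_A$ (each face $A_j$ carries exactly the vertex $b_j$). Dually, for $b_j\in B_i$ (i.e.\ $j\neq i$) I would check that $g_0(b_j)=a_{j+\mu}$ lies on $g_0(B_i)=A_{i+\mu}$, i.e.\ that $a_{j}\in A_{i}$, which again holds precisely when $j\neq i$ by the structure of $A$-blocks. The two remaining types, $a_j\in A_i$ and $b_j\in A_i$, are handled by the same bookkeeping and land on the defining incidences of $B$-blocks in $\lines_B$; the shift by $-1$ on the $a,A$-coordinates is exactly what realigns the $\varphi$-twist between the two simplices.

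The one point deserving genuine care is that the $-1$ shift is applied only to the $\varepsilon=1,2$ coordinates and not to $\varepsilon=-1,-2$; this asymmetry is precisely what makes $g_0$ an incidence-preserving interchange rather than a mere simplex swap, and I would make sure the index arithmetic mod $\nu_t$ is consistent across the coupled pairs. In effect the proof reduces to four short membership verifications, transcribing the argument of \ref{presautomorf} with the roles of $A$ and $B$ exchanged and the extra shift accounted for. The main obstacle is thus purely notational: keeping the modular index $i_0$, the offset $\mu_k^t$, and the sign $\varepsilon$ straight so that the application of $\varphi$ (a unit forward shift inside each cycle) is correctly cancelled or produced by the $-1$ in \eqref{map:inter}. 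Once those bookkeeping identities are confirmed in the $a$--$B$ case, the other three follow mechanically, and $g_0$ is an automorphism interchanging $S_A$ and $S_B$.
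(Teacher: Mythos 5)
Your proof is correct and takes essentially the same route as the paper's: unwind the definition of $g_0$ to see that it exchanges $a/A$-objects with $b/B$-objects, then verify incidence on a $B$-block in the two cases $b_j\in B_i$ (for $j\neq i$, landing on $a_j\in A_i$) and $a_{\varphi(i)}\in B_i$ (landing on $b_i\in A_i$), deferring the $A$-block cases to symmetry. The only blemish is notational: your subscripts $i+\mu_k^t$ should read $i_0+\mu_k^t=i$, which does not affect the argument.
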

\begin{proof}
Immediately from \eqref{map:pres}, $g_0$ maps $S_A$ onto $S_B$, and $S_B$ onto $S_A$.
We restrict our proof to the incidence relation involving $B$-blocks, as the case with $A$-blocks runs similarly.
Let $i\in X^{\nu_t}_{k}$. From \eqref{label:points} $B_i$ is represented by the sequence $(t,k,i_0,-2)$ for some $i_0\in\{0,\ldots,\nu_t-1\}$. The points that belongs to $B_{i}$ are $b_j$ with $j\in I\setminus\{i\}$ and $a_{\varphi(i)}$.
Clearly, $g_0(b_j)=a_j\in A_{i}=g_0(B_{i})$. We have $a_{\varphi(i)}=(t,k,i_0+1\mod\nu_t,1)$ and thus $g_0(a_{\varphi(i)})=(t,k,i_0,-1)=b_{i}$. Then finally $g_0(a_{\varphi(i)})\in g_0(B_{i})$. 
\end{proof}
Since $g_{(v,\alpha)}=g_0f_{(v,\alpha)}$, from \ref{presautomorf} and \ref{autBontoA} we infer that
\begin{cor}\label{interautomorf}
The map $g_{(v,\alpha)}$ is an automorphism of $\cal M$, which interchanges simplices
$S_A$, $S_B$.
\end{cor}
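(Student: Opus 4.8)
The plan is to read the corollary off the factorization $g_{(v,\alpha)}=g_0 f_{(v,\alpha)}$ recorded just before the statement, turning it into a formal consequence of Lemmas \ref{presautomorf} and \ref{autBontoA}. First I would confirm this factorization directly from the defining formulas \eqref{map:pres} and \eqref{map:inter}, recalling that $g_0=g_{(\mathbf{0},\id)}$. Applying $f_{(v,\alpha)}$ to a coordinate vector $(t,k,i,\varepsilon)$ relabels $k$ as $\alpha_t(k)$ and shifts $i$ by $v_k^t$ while leaving $\varepsilon$ fixed; composing with $g_0$ then flips $\varepsilon$ to $-\varepsilon$ and applies the sign-dependent shift of the third coordinate prescribed in \eqref{map:inter}. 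Matching the two regimes $\varepsilon\in\{1,2\}$ and $\varepsilon\in\{-1,-2\}$ against the two branches of the definition of $g_{(v,\alpha)}$ then yields the identity on every point and every block of $\cal M$.

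Granting the factorization, both assertions follow immediately. By Lemma \ref{presautomorf} the map $f_{(v,\alpha)}$ is an automorphism of $\cal M$ preserving each of the simplices $S_A$, $S_B$; by Lemma \ref{autBontoA} the map $g_0$ is an automorphism of $\cal M$ interchanging $S_A$ and $S_B$. Since a composition of automorphisms is again an automorphism, $g_{(v,\alpha)}=g_0 f_{(v,\alpha)}$ is an automorphism of $\cal M$. For its effect on the simplices, $f_{(v,\alpha)}$ sends $S_A$ to $S_A$ and $S_B$ to $S_B$, and the subsequent $g_0$ swaps the two; hence $g_{(v,\alpha)}$ carries $S_A$ onto $S_B$ and $S_B$ onto $S_A$, that is, it interchanges the simplices.

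The only point requiring care is the modular bookkeeping in the composition: one must check that the $-1$ shift attached by $g_0$ to the values $\varepsilon=1,2$ combines correctly with the shift $v_k^t$ produced by $f_{(v,\alpha)}$, and that the sign flip $\varepsilon\mapsto-\varepsilon$ respects the case split of \eqref{map:inter}. This is a brief verification rather than a genuine obstacle; once the two sign regimes are matched against the two branches, closure of the automorphism group under composition delivers the statement.
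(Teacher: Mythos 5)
Your proposal is correct and is essentially the paper's own argument: the paper derives the corollary in one line from the factorization $g_{(v,\alpha)}=g_0f_{(v,\alpha)}$ together with Lemmas \ref{presautomorf} and \ref{autBontoA}, exactly as you do. Your explicit check of the factorization against the formulas \eqref{map:pres} and \eqref{map:inter} is a harmless (and correct) elaboration of what the paper takes as immediate.
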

We write ${\cal M}^{\nu_t}_{k}$ for the set of all points and blocks of $\cal M$ labelled by the elements of the set $X^{\nu_t}_{k}$, and ${\cal M}^{\nu_t}=\{{\cal M}^{\nu_t}_{k}\colon k\leq m_t\}$.
\begin{lem}\label{autBontoB}
Let $f$ be an automorphism of ${\cal M}$, which
\begin{enumerate}[(1)]\itemsep-2pt
\item\label{caseone}
 maps $B$-blocks onto $B$-blocks, or
\item\label{casetwo}
maps $B$-blocks onto $A$-blocks.
\end{enumerate}
There is $v\in\bigtimes_{t=1}^{r}C_{\nu_t}^{m_t}$ and
$\alpha\in\bigtimes_{t=1}^{r}S_{m_t}$ such that 
\begin{sentences}\itemsep-2pt
\item\label{casef}
$f=f_{(v,\alpha)}$ in case \eqref{caseone}, or
\item\label{caseg}
$f=g_{(v,\alpha)}$ in case \eqref{casetwo}.
\end{sentences}
In particular, for each $k\leq m_t$ there is $k'\leq m_t$ such that $f({\cal M}^{\nu_t}_{k})={\cal M}^{\nu_t}_{k'}$.
\end{lem}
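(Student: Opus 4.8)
The plan is to show that an automorphism $f$ satisfying either hypothesis must agree with one of the explicitly defined maps $f_{(v,\alpha)}$ or $g_{(v,\alpha)}$ for suitable parameters. I would start by invoking Lemma~\ref{lem:isoform} in the special case $\varphi=\psi$ (so that $f$ is an automorphism of $\cal M$): this already yields a permutation $\alpha^\circ\in S_n$ with $\alpha^\circ\varphi=\varphi\alpha^\circ$, i.e.\ $\alpha^\circ$ lies in the centralizer of $\varphi$, together with the explicit action of $f$ on all four vertex-types $a_i,b_i,A_i,B_i$. The crucial consequence of $\alpha^\circ$ commuting with $\varphi$ is that $\alpha^\circ$ must \emph{permute the cycles of $\varphi$ of each fixed length among themselves}: it sends the support $X^{\nu_t}_k$ of one $\nu_t$-cycle bijectively onto the support $X^{\nu_t}_{k'}$ of another $\nu_t$-cycle, and moreover does so compatibly with the cyclic ordering (it maps the successor relation induced by $\varphi$ to itself). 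This is precisely the structural fact I need, and it is the standard description of the centralizer $\prod_t (C_{\nu_t}\wr S_{m_t})$ of a permutation with cycle type $\prod_t \nu_t^{m_t}$.

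Next I would translate this structural description into the parameters $(v,\alpha)$. Because $\alpha^\circ$ carries the $k$-th cycle of length $\nu_t$ to some $k'$-th cycle of the same length, I define $\alpha_t\in S_{m_t}$ by $\alpha_t(k)=k'$; this gives the factor $\alpha=(\alpha_1,\ldots,\alpha_r)\in\bigtimes_{t=1}^r S_{m_t}$. The restriction of $\alpha^\circ$ to the single block $X^{\nu_t}_k$, being a successor-preserving bijection between two cyclically ordered sets of size $\nu_t$, is a cyclic shift; recording this shift as an element of $C_{\nu_t}$ defines the coordinate $v_k^t$, and assembling these gives $v\in\bigtimes_{t=1}^r C_{\nu_t}^{m_t}$. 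In the coordinate notation \eqref{label:points}, the formula $\alpha^\circ(i+\mu_k^t)=(i+v^t_k \bmod \nu_t)+\mu^t_{\alpha_t(k)}$ is exactly the index bookkeeping appearing in \eqref{map:pres}, so case~\eqref{caseone} gives $f=f_{(v,\alpha)}$ directly from Lemma~\ref{lem:isoform}\eqref{lem:isoform:i}.

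For case~\eqref{casetwo}, where $f$ sends $B$-blocks to $A$-blocks, I would compose with the interchanging automorphism: by Lemma~\ref{autBontoA} the map $g_0$ is an automorphism swapping $S_A$ and $S_B$, so $g_0\circ f$ maps $B$-blocks back to $B$-blocks and falls under case~\eqref{caseone}. Hence $g_0\circ f=f_{(v,\alpha)}$ for some parameters, giving $f=g_0^{-1}f_{(v,\alpha)}$; since $g_0$ is an involution (one checks $g_0^2=\id$ from \eqref{map:inter}, or equivalently $g_0^{-1}=g_0$) and $g_{(v,\alpha)}=g_0 f_{(v,\alpha)}$ by the remark preceding Corollary~\ref{interautomorf}, this yields $f=g_{(v,\alpha)}$, establishing \eqref{caseg}. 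The final ``in particular'' clause is immediate from either normal form, since both \eqref{map:pres} and \eqref{map:inter} send every object labelled in $X^{\nu_t}_k$ to an object labelled in $X^{\nu_t}_{\alpha_t(k)}$, so $k'=\alpha_t(k)$ works.

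The main obstacle I anticipate is the bookkeeping in the second paragraph: extracting the two parameters $v$ and $\alpha$ from the single permutation $\alpha^\circ$ requires care that the cyclic shift $v^t_k$ is read off consistently with the $\bmod\,\nu_t$ conventions fixed in \eqref{label:points} and \eqref{map:pres}, and in particular that the shift is indexed by $k$ (the source cycle) rather than by $\alpha_t(k)$. The verification that $\alpha^\circ$ preserves successors within each cycle—equivalently that the commutation $\alpha^\circ\varphi=\varphi\alpha^\circ$ forces a \emph{rigid} cyclic shift rather than an arbitrary bijection of the support—is the genuinely load-bearing point; everything else is a matter of matching indices to the defining formulas.
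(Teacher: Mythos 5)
Your case (1) is essentially the paper's own argument. Both routes start from Lemma \ref{lem:isoform} applied with $\psi=\varphi$, so that $f$ acts on all four vertex types through a single permutation $\alpha^\circ$ lying in the centralizer of $\varphi$; the paper then proves the rigid-shift property by hand (inducting along a cycle via the observation that the unique $B$-block containing $a_i$ is $B_{\varphi^{-1}(i)}$, which also forces $t'=t$), whereas you quote the standard structure of the centralizer, $\bigtimes_{t=1}^{r}(C_{\nu_t}\wr S_{m_t})$, and translate it into the coordinates of \eqref{map:pres}. That is the same content, and your indexing caveat (the shift $v^t_k$ attached to the source cycle $k$, not to $\alpha_t(k)$) is resolved correctly.

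Case (2), however, contains a genuine error: $g_0$ is \emph{not} an involution. From \eqref{map:inter} one has $g_0\colon (t,k,i,1)\mapsto (t,k,i-1,-1)$ and $(t,k,i,-1)\mapsto (t,k,i,1)$, hence $g_0^2\colon (t,k,i,1)\mapsto (t,k,i-1,1)$; that is, $g_0^2=f_{(-\mathbf{1},\id)}$, which is the identity only if every $\nu_t=1$, i.e.\ only if $\varphi=\id$. (This is consistent with the formula $g_0^{z}=f_{(-\mathbf{\frac{z}{2}},\id)}$ for even $z$ stated in the proof of Theorem \ref{autgroup}, and with the factors $C_{2\nu_i}$ in $\Aut({\cal M})$: the order of $g_0$ on ${\cal M}^{\nu_t}$ is $2\nu_t$, not $2$.) Consequently your chain $f=g_0^{-1}f_{(v,\alpha)}=g_0f_{(v,\alpha)}=g_{(v,\alpha)}$ is false as an identity of maps, and the parameter $v$ you name is the wrong one. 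The conclusion itself survives because the lemma is existential in $(v,\alpha)$: the correct inverse is $g_0^{-1}=g_{(\mathbf{1},\id)}=g_0f_{(\mathbf{1},\id)}$, whence $f=g_0f_{(\mathbf{1},\id)}f_{(v,\alpha)}=g_0f_{(v+\mathbf{1},\alpha)}=g_{(v+\mathbf{1},\alpha)}$, which is exactly the computation in the paper's proof of case \eqref{caseg}. So the repair is one line, but as written your verification claim ``$g_0^2=\id$ from \eqref{map:inter}'' fails for every $\varphi\neq\id$, i.e.\ in almost all cases the lemma covers.
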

\begin{proof}
(i):
Let $i\in X^{\nu_t}_{k}$. Assume that $f(B_{i})=B_{j}$ for some $j\in I$. According to \eqref{label:points} there is $i_0\in\{0,\ldots,\nu_t-1\}$ such that $B_{i}=(t,k,i_0,-2)$, and $j_0\in\{0,\ldots,\nu_{t'}-1\}$ such that $B_{j}=(t',k',j_0,-2)$ for some $t'\leq r$, $k'\leq m_{t'}$.
Then, by \ref{lem:isoform}\eqref{lem:isoform:ii} we get $f((t,k,i_0,\varepsilon))=(t',k',j_0,\varepsilon)$
for each value of $\varepsilon$.
The unique $B$-block containing $a_{i}=(t,k,i_0,1)$ is $B_{\varphi^{-1}(i)}=(t,k,i_0-1\mod \nu_t,-2)$, and the unique $B$-block containing
$a_{j}$ is $B_{\varphi^{-1}(j)}=(t',k,j_0-1\mod\nu_{t'},-2)$.
Hence, $f$ maps $(t,k,i_0-1\mod \nu_t,-2)$ onto $(t',k',j_0-1\mod\nu_{t'},-2)$, and $f$ maps $(t,k,i_0-1\mod \nu_t,\varepsilon)$ onto $(t',k',j_0-1\mod\nu_{t'},\varepsilon)$ generally. By induction we get 
\begin{ctext}
$f\colon (t,k,i_0-u\mod\nu_t,\varepsilon)\mapsto
(t',k',j_0-u\mod\nu_{t'},\varepsilon)$ for all $u=0,\ldots,\nu_t-1$. 
\end{ctext}
This characterizes the action of $f$ on ${\cal M}^{\nu_t}_{k}$, in particular, 
$f({\cal M}^{\nu_t}_{k})\subseteq{\cal M}^{\nu_{t'}}_{k'}$.
Conversely, $f^{-1}$ maps $B_j$ onto $B_i$. By the reasoning, analogous to this, which has been already done, we come to
$f^{-1}({\cal M}^{\nu_{t'}}_{k'})\subseteq {\cal M}^{\nu_t}_{k}$. Consequently, 
$f({\cal M}^{\nu_t}_{k})={\cal M}^{\nu_{t'}}_{k'}$, and therefore $t'=t$ since $f$ is a bijection.
It provides that $f$ preserves the set ${\cal M}^{\nu_t}$.
We define the map $\alpha\in S_{m_t}$ associated 
with $f\ciach{{\cal M}^{\nu_t}}$ by the formula
\begin{ctext}
 $\alpha\colon k\mapsto k'$ iff $f({\cal M}^{\nu_t}_{k})={\cal M}^{\nu_{t}}_{k'}$, 
\end{ctext}
for all $k,k'\leq m_t$.
%
Set $v_k^t=j_0-i_0\mod\nu_t$. Finally the formula for $f$ is the following:
\begin{ctext}
$f\colon (t,k,i,\varepsilon)\mapsto
(t,\alpha(k),i+v_k^t\mod\nu_{t},\varepsilon)$ for all $i=0,\ldots,\nu_t-1$.
\end{ctext}

(ii):
Based on \ref{autBontoA}, $g_0f$ is an automorphism of $\cal M$, which maps $B$-blocks onto $B$-blocks.
Then, from \ref{autBontoB}\eqref{casef}, $g_0f=f_{(v,\alpha)}$ for some $v\in\bigtimes_{t=1}^{r}C_{\nu_t}^{m_t}$ and
$\alpha\in\bigtimes_{t=1}^{r}S_{m_t}$, and thus $f=g^{-1}_0f_{(v,\alpha)}$. Note that $g^{-1}_0=g_{(\mathbf{1},\id)}$. Consequently, $f=g_{(\mathbf{1},\id)}f_{(v,\alpha)}=
g_0f_{(v+\mathbf{1},\alpha)}=g_{(v+\mathbf{1},\alpha)}$. What is more, $f$ preserves the set ${\cal M}^{\nu_t}$, that follows directly from \eqref{map:inter}.
\end{proof}
Now we characterize automorphisms of ${\goth M}_{(n,\varphi)}$, which can be uniquely decomposed into two
mutually inscribed $n$-simplices.
\begin{thm}\label{autgroup}
Let ${\cal M}={\goth M}_{(n,\varphi)}$ and
$1\leq\nu_1<\ldots<\nu_r$ be the lengths of the cycles in the cycle decomposition of $\varphi\in S_n$.
Assume that either
$n=4$ and $\varphi\neq\id$ contains no cycle of length $2$, or $n\geq 5$. 
Then $\Aut({\cal M})\cong
\bigoplus_{i=1}^{r}\big(C_{2\nu_i}^{m_i}\rtimes S_{m_i}\big)$.
\end{thm}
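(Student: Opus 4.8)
The plan is to show that $\Aut(\mathcal{M})$ is exactly the set of maps $\{f_{(v,\alpha)}\} \cup \{g_{(v,\alpha)}\}$ and then to identify the group structure of this set. The two hypotheses (either $n=4$ with $\varphi \neq \id$ containing no $2$-cycle, or $n \geq 5$) are precisely the hypotheses of Lemma \ref{lem:isoform} and guarantee (via \ref{cor:all-s-labile}) that $\mathcal{M}$ has no special decomposition, so every automorphism either preserves both simplices $S_A, S_B$ or interchanges them. First I would invoke this dichotomy: if $f \in \Aut(\mathcal{M})$ maps $B$-blocks to $B$-blocks, then by \ref{autBontoB}(i) we have $f = f_{(v,\alpha)}$ for suitable $v,\alpha$; if $f$ maps $B$-blocks to $A$-blocks, then by \ref{autBontoB}(ii) we have $f = g_{(v,\alpha)}$. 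Conversely, \ref{presautomorf} and \ref{interautomorf} confirm that all such maps are genuine automorphisms. This establishes that $\Aut(\mathcal{M}) = \{f_{(v,\alpha)} : v,\alpha\} \cup \{g_{(v,\alpha)} : v,\alpha\}$ as a set, with the $f$'s forming the index-$2$ subgroup that preserves $S_A$ (this subgroup is normal, being a kernel of the homomorphism to $C_2$ recording whether the simplices are swapped).

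Next I would compute the group law on the simplex-preserving subgroup $H = \{f_{(v,\alpha)}\}$. Composing two maps of the form \eqref{map:pres} shows that the permutation parts $\alpha = (\alpha_1,\ldots,\alpha_r) \in \bigtimes_t S_{m_t}$ compose directly, while the cyclic shift vectors $v \in \bigtimes_t C_{\nu_t}^{m_t}$ add, but with the index of each shift component permuted by $\alpha$ — that is, the $S_{m_t}$ factor permutes the $m_t$ copies of $C_{\nu_t}$. This is exactly the wreath-type semidirect product structure: within each cycle-length class $t$, the subgroup is $C_{\nu_t}^{m_t} \rtimes S_{m_t}$, and across distinct lengths the factors commute and act on disjoint label sets, giving $H \cong \bigtimes_{t=1}^{r}\big(C_{\nu_t}^{m_t} \rtimes S_{m_t}\big)$. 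The key computation here is just verifying from \eqref{map:pres} that $f_{(v,\alpha)} \circ f_{(w,\beta)} = f_{(w + \beta^{-1}\!\cdot v,\, \alpha\beta)}$ (or the analogous convention), which is a routine index-chase.

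The subtle part is incorporating $g_0$ to pass from $H$ (with cyclic factors $C_{\nu_t}$) to the full group (with factors $C_{2\nu_t}$). The idea is that $g_0$ is an involution-like element swapping the two simplices, and the relation $g_{(v,\alpha)} = g_0 f_{(v,\alpha)}$ together with $g_0^{-1} = g_{(\mathbf{1},\id)}$ (noted in the proof of \ref{autBontoB}) shows that $g_0$ does not square to the identity but rather to the shift $f_{(\mathbf{1},\id)}$, which generates a $C_{\nu_t}$ inside each block. Thus $g_0$ and the shift generator of $C_{\nu_t}$ together generate a cyclic group of order $2\nu_t$: the swap composed with the ``half-shift'' \eqref{map:inter} has order $2\nu_t$, not $2$. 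I would make this precise by exhibiting, within each cycle-length class, the element combining $g_0$ with the $\nu_t$-shift as a generator of $C_{2\nu_t}$, and checking that conjugation by $\alpha \in S_{m_t}$ permutes the $m_t$ copies of $C_{2\nu_t}$ just as before. This upgrades each factor $C_{\nu_t}^{m_t} \rtimes S_{m_t}$ to $C_{2\nu_t}^{m_t} \rtimes S_{m_t}$, and since the classes are independent I obtain the direct sum $\bigoplus_{i=1}^{r}\big(C_{2\nu_i}^{m_i} \rtimes S_{m_i}\big)$.

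I expect the main obstacle to be bookkeeping rather than conceptual: correctly tracking how the half-shift in \eqref{map:inter} interacts with the full shift in \eqref{map:pres} so that the cyclic order genuinely doubles to $2\nu_t$. The reason the factor is $C_{2\nu_t}$ and not $C_{\nu_t} \times C_2$ is that the simplex-swap $g_0$ fails to commute past a single shift as an involution — its square is a nontrivial rotation — so the extension is non-split at the level of the cyclic factor and produces one cyclic group of double the order. Verifying this doubling carefully, and confirming that the $S_{m_t}$ action lifts consistently to the doubled cyclic factors (so the semidirect product notation is justified), is the crux; the rest follows from the already-established surjectivity of the parametrization in \ref{autBontoB} and the disjointness of the cycle-length classes.
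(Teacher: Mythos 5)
Your first two paragraphs are correct and follow exactly the paper's own route: the absence of special decompositions forces every automorphism to preserve or to interchange $S_A$ and $S_B$; Lemma \ref{autBontoB} then gives $\Aut({\cal M})=\{f_{(v,\alpha)}\}\cup\{g_0f_{(v,\alpha)}\}$; and the composition law from \eqref{map:pres} identifies the simplex-preserving subgroup as $H\cong\bigtimes_{t=1}^{r}\big(C_{\nu_t}^{m_t}\rtimes S_{m_t}\big)$, with $g_0$ central and $g_0^2=f_{(-\mathbf{1},\id)}$. The genuine gap is your third paragraph, and it cannot be repaired by bookkeeping, because the statement it aims at is false whenever $\varphi$ has more than one cycle. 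Passing from $C_{\nu_t}^{m_t}$ to $C_{2\nu_t}^{m_t}$ requires, for \emph{each} of the $m_t$ cycles separately, an automorphism of order $2\nu_t$ that swaps the two simplices on the points and blocks labelled by that cycle and fixes everything else. No such ``partial swap'' exists: every swapping automorphism has the form $g_0f_{(v,\alpha)}$, hence swaps on all cycles of all lengths simultaneously. Indeed, if $i$ and $j$ lie in different cycles, then $a_j\in A_i$, while a partial swap supported on the cycle of $i$ fixes $a_j$ and sends $A_i$ to some $B$-block $B_{i'}$ with $i'$ in that cycle; the unique $a$-point of $B_{i'}$ is $a_{\varphi(i')}$ and $\varphi(i')\neq j$, so incidence is destroyed. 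Your own intermediate result already contradicts the target by a count: it gives $|\Aut({\cal M})|=2\prod_{t}\nu_t^{m_t}m_t!$, whereas the claimed group has order $\prod_{t}(2\nu_t)^{m_t}m_t!=2^{m_1+\cdots+m_r}\prod_{t}\nu_t^{m_t}m_t!$; these agree only when $\varphi$ is a single $n$-cycle. Concretely, $n=4$ and $\varphi=(123)(4)$ satisfy the hypotheses, yet $\Aut({\cal M})=\langle g_0\rangle\cong C_6$ has order $6$ while $C_2\oplus C_6$ has order $12$; for $\varphi=\id\in S_5$ one gets $S_5\times C_2$ of order $240$, not $C_2^{5}\rtimes S_5$ of order $3840$.

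You are in good company: the paper's own proof bridges precisely this step with the maps $g_{0_k}$, defined to act as $g_0$ on ${\cal M}^{\nu_t}_k$ and as the identity elsewhere, and these are exactly the partial swaps just shown not to preserve incidence; so the published argument, and the theorem as stated, break at the same spot that your reconstruction does. What your two correct paragraphs actually prove is the amended statement: $\Aut({\cal M})=H\cup g_0H$, the extension of $H\cong\bigtimes_{t=1}^{r}\big(C_{\nu_t}^{m_t}\rtimes S_{m_t}\big)$ obtained by adjoining the central element $g_0$ subject to $g_0^2=f_{(-\mathbf{1},\id)}$. Equivalently, $\Aut({\cal M})$ is the subgroup of $\bigoplus_{t}\big(C_{2\nu_t}^{m_t}\rtimes S_{m_t}\big)$ consisting of those elements all of whose $C_{2\nu_t}$-coordinates have the same parity (a single common parity bit recording whether the simplices are swapped); this subgroup has index $2^{m_1+\cdots+m_r-1}$, splits as $H\times C_2$ when every $\nu_t$ is odd, and coincides with the paper's formula exactly in the single-cycle case, where both descriptions give $C_{2n}$.
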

\begin{proof}
Let $F$ be an automorphism of ${\cal M}$.
By \ref{nonoriginal}, there is no special decomposition of $\cal M$. Thus, $F$ either interchanges $S_A(\cal M)$ with $S_B(\cal M)$ or preserves each of them.
According to \ref{autBontoB} there is $v_0\in\bigtimes_{t=1}^{r}C_{\nu_t}^{m_t}$ and
$\alpha_0\in\bigtimes_{t=1}^{r}S_{m_t}$ such that $F=f_{(v_0,\alpha_0)}$ or $F=g_{(v_0,\alpha_0)}=g_0f_{(v_0,\alpha_0)}$.
Furthermore, every $f_{(v,\alpha)}$, $g_{(v,\alpha)}$ with $v\in\bigtimes_{t=1}^{r}C_{\nu_t}^{m_t}$ and
$\alpha\in\bigtimes_{t=1}^{r}S_{m_t}$ is an automorphism of ${\cal M}$ by \ref{presautomorf}, \ref{interautomorf}.
Since, by \ref{autBontoB}, $F$ preserves each of the sets ${\cal M}^{\nu_t}$, we can restrict the proof to the one fixed set ${\cal M}^{\nu_t}$. Thus, we assume that $i=0,\ldots,\nu_t-1$, $k\leq m_t$. For the simplicity of the notation, we 
will write 
$(\alpha(k),i+v_{\alpha(k)},\varepsilon)$ instead of $(t,\alpha_t(k),i+v_{\alpha_t(k)}^t\mod \nu_{t},\varepsilon)$. Moreover, we identify $f_{(v,\alpha)}$ with $f_{(v,\alpha)}\ciach{{\cal M}^{\nu_t}}$, and $g_{(v,\alpha)}$ with $g_{(v,\alpha)}\ciach{{\cal M}^{\nu_t}}$, so we assume $v\in C_{\nu_t}^{m_t}$, $\alpha\in S_{m_t}$.

Let $w\in C_{\nu_t}^{m_t}$, $\beta\in S_{m_t}$ and note that
\begin{cmath}
f_{(w,\beta)}f_{(v,\alpha)}((k,i,\varepsilon)) = f_{(w,\beta)}((\alpha(k),i+v_k,\varepsilon))= 
(\beta\alpha(k),i+v_k+w_{\alpha(k)},\varepsilon).
\end{cmath}
Let $\phi^{\alpha}\colon S_{m_t}\longrightarrow \Aut(C_{\nu_t}^{m_t})$ be the map defined by 
\begin{ctext}
 $\phi_{\alpha}\colon (v_1,\ldots,v_{m_t})\mapsto(v_{\alpha(1)},\ldots,v_{\alpha(m_t)})$,
\end{ctext}
 Then the formula for the composition of $f_{(v,\alpha)}$ and $f_{(w,\beta)}$ is 
\begin{ctext}
$f_{(w,\beta)}f_{(v,\alpha)}=f_{(v+\phi_{\alpha}(w),\beta\alpha)}$.
\end{ctext}
For $f=f_{(v,\alpha)}$, $g_0$, and $\zeta=1,2$ we get\\
$\begin{array}{l}
(k,i,\zeta)\mappedby{g_0}(k,i-1,-\zeta)\mappedby{{f}}
(\alpha(k),i-1+v_k,-\zeta), \\ 
(k,i,-\zeta)\mappedby{g_0}(k,i,\zeta)\mappedby{{f}}
(\alpha(k),i+v_k,\zeta),\\
(k,i,\zeta)\mappedby{{f}}(\alpha(k),i+v_k,\zeta)\mappedby{g_0}
(\alpha(k),i+v_k-1,-\zeta),\\
(k,i,-\zeta)\mappedby{{f}}(\alpha(k),i+v_k,-\zeta)\mappedby{g_0}
(\alpha(k),i+v_k,\zeta).
\end{array}$ \\
This proves that $g_0$ commutes with $f_{(v,\alpha)}$. 
Note also that
\begin{equation*}
g_0^{z}=
\left\{
\begin{array}{lll}
f_{(-\mathbf{\frac{z}{2}},\id)} & \text{if } z \text{ is even}, \\
g_{(-\mathbf{\frac{z-1}{2}},\id)} & \text{if } z \text{ is odd}.
\end{array}
\right.
\end{equation*}
Let $k'\leq m_{t'}$.
We introduce the family of maps
\begin{equation*}
g_{0_k}((k',i,\varepsilon))=
\left\{
\begin{array}{lll}
g_0((k',i,\varepsilon)) & \text{if } k'=k, \\
(k',i,\varepsilon) & \text{otherwise}.
\end{array}
\right.
\end{equation*}
Then the following equalities hold
\begin{ctext}
$\{g_{0_k}^{z}\colon z=0,\ldots,2\nu_t-1 \text{ and } z \text{ is even}\}=
\{f_{(v,\id)}\colon v_{k'}=0 \text{ for } k'\neq k\}$, \\
$\{g_{0_k}^z\colon z=0,\ldots,2\nu_t-1 \text{ and } z \text{ is odd}\}=
\{g_{(v,\id)}\colon v_{k'}=0 \text{ for } k'\neq k\}$.
\end{ctext}
Therefore, for each $v\in C_{\nu_t}^{m_t}$ we have
 $f_{(v,\id)}=g_{0_1}^{z_{1}}g_{0_2}^{z_{2}}\ldots
g_{0_{m_t}}^{z_{m_t}}$,
where all numbers $z_{k}=0,\ldots,2\nu_t-1$ are even. Likewise
$g_{(v,\id)}=g_{0_1}^{z_{1}}g_{0_2}^{z_{2}}\ldots
g_{0_{m_t}}^{z_{m_t}}$,
 where all numbers $z_{k}$ are odd.
Hence, for each $F\in\Aut({\cal M})$ there is $v\in C_{\nu_t}^{m_t}$ and $\alpha\in S_{m_t}$
such that $F=f_{(v,\id)}f_{(\mathbf{0},\alpha)}$ or $F=g_{(v,\id)}f_{(\mathbf{0},\alpha)}$.
To complete the proof it suffices to determine all suitable compositions: \\
$f_{(v,\id)}f_{(w,\id)}=f_{(w+v,\id)}$, \\
 $f_{(v,\id)}g_{(w,\id)}=f_{(v,\id)}g_0f_{(w,\id)}=g_0f_{(v,\id)}f_{(w,\id)}=
g_0f_{w+v,\id}=g_{w+v,\id}$, \\
$g_{(v,\id)}g_{(w,\id)}=g_0f_{(v,\id)}g_0f_{(w,\id)}=g_0^2f_{(w+v),\id}=
f_{(-\mathbf{1},\id)}f_{(w+v),\id}=f_{(w+v-\mathbf{1}),\id}$,\\
$f_{(\mathbf{0},\alpha)}f_{(\mathbf{0},\beta)}=f_{(\mathbf{0},\beta\alpha)}$, \\
$f_{(v,\id)}f_{(\mathbf{0},\alpha)}=
f_{(\phi^{\alpha}(v),\alpha)}$, and finally \\
$g_{(v,\id)}f_{(\mathbf{0},\alpha)}=g_0f_{(v,\id)}f_{(\mathbf{0},\alpha)}=
g_0f_{(\phi^{\alpha}(v),\alpha)}=g_{(\phi^{\alpha}(v),\alpha)}$.
\end{proof}
The M\"obius $n$-pairs, which automorphism groups are not characterized by \ref{autgroup}, admit a special decomposition.
We say that an automorphism $f$ of a M\"obius $n$-pair ${\cal M}$ {\em yields a special decomposition} of ${\cal M}$ if $f$ maps the pair $\{S_A,S_B\}$ onto a distinct pair of mutually inscribed simplices.
\begin{thm}\label{autgrup4}
The automorphism group of ${\goth M}_{(4,\varphi)}$ is isomorphic to 
\begin{sentences}
\item\label{autgrup4:1}
 $(C_4\oplus S_2)\rtimes C_2$ if $\varphi\in S_4$ contains precisely one cycle of length $2$,
\item\label{autgrup4:2}
 $(C_4^2\rtimes S_2)\rtimes C_2$ if $\varphi\in S_4$ contains two cycles of length $2$.
\end{sentences}
\end{thm}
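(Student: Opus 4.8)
My plan is to leverage the one feature that separates these two configurations from those covered by \ref{autgroup}, namely the presence of a special decomposition. By \ref{conclass:strong} I may replace $\varphi$ by a convenient conjugate, say $(12)(3)(4)$ in case \eqref{autgrup4:1} and $(13)(24)$ in case \eqref{autgrup4:2}, and in either case \ref{cor:all-s-labile} tells me that ${\cal M}={\goth M}_{(4,\varphi)}$ carries exactly one special decomposition $\{S_1,S_2\}$ besides the standard one $\{S_A,S_B\}$. Since any automorphism of ${\cal M}$ sends a decomposition of the points into two mutually inscribed $4$-simplices to another such decomposition, $\Aut({\cal M})$ acts on the two-element set $\big\{\{S_A,S_B\},\{S_1,S_2\}\big\}$. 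Writing $H$ for the stabiliser of $\{S_A,S_B\}$, I obtain $[\Aut({\cal M}):H]\le 2$, and the proof divides into computing $H$ and producing one automorphism that swaps the two decompositions.

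For the computation of $H$ I would show that it is precisely the group generated by the maps $f_{(v,\alpha)}$ and $g_{(v,\alpha)}$ of \eqref{map:pres} and \eqref{map:inter}. An element of $H$ fixes each of $S_A,S_B$ or interchanges them; in particular, since for $n=4$ every block carries either three $a$-points and one $b$-point or three $b$-points and one $a$-point, a pair-preserving automorphism cannot mix these and therefore sends $B$-blocks to $B$-blocks globally, or to $A$-blocks globally. This is exactly the dichotomy that the hypothesis of \ref{lem:isoform} was designed to guarantee for \emph{all} automorphisms; restricted to pair-preserving ones it holds automatically even when $\varphi$ has a $2$-cycle, so the graph-uniqueness argument behind \ref{lem:isoform} and the cycle-transport of \ref{fact:cycles} apply verbatim. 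Hence, via \ref{presautomorf}, \ref{autBontoA}, \ref{interautomorf} and the pair-preserving version of \ref{autBontoB}, the maps $f_{(v,\alpha)}$ (which realise the centraliser of $\varphi$) together with the global swap $g_0$ exhaust $H$, and reading off the cycle type of $\varphi$ identifies $H$ as $C_4\oplus S_2$ in case \eqref{autgrup4:1} and as $C_4^2\rtimes S_2$ in case \eqref{autgrup4:2}.

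For the swapping automorphism I would make the special decomposition explicit. The implication (ii)$\Rightarrow$(i) of \ref{nonoriginal}, applied to the invariant set $X=\{1,2\}$ in case \eqref{autgrup4:1} and $X=\{1,3\}$ in case \eqref{autgrup4:2}, already displays $S_1,S_2$ together with an incidence-preserving matching of their points and blocks against those of $S_A,S_B$; promoting that matching to a bijection of $A\cup B$ and checking preservation of incidence on the defining blocks $A_i,B_i$ yields an automorphism $\delta\in\Aut({\cal M})$ with $\delta(\{S_A,S_B\})=\{S_1,S_2\}$. This simultaneously proves that the orbit has size $2$, so $[\Aut({\cal M}):H]=2$ and $\delta\notin H$. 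Replacing $\delta$ by a suitable element of the coset $\delta H$ I may arrange $\delta^2=\id$, so that $\langle\delta\rangle\cong C_2$ is a complement to $H$ and $\Aut({\cal M})=H\rtimes\langle\delta\rangle$.

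It then remains to pin down the extension, and this is where I expect the real work to be. I would compute the conjugation action $h\mapsto\delta h\delta^{-1}$ on the generators $f_{(v,\alpha)},g_{(v,\alpha)}$ produced above: because $\delta$ interchanges the two decompositions it transposes the ``preserving'' and ``special'' roles of the cyclic and symmetric factors of $H$, and the resulting action is exactly the one presenting $\Aut({\cal M})$ as $(C_4\oplus S_2)\rtimes C_2$ in case \eqref{autgrup4:1} and as $(C_4^2\rtimes S_2)\rtimes C_2$ in case \eqref{autgrup4:2}. The two genuine obstacles are thus: first, justifying that the rigidity of \ref{autBontoB} survives a $2$-cycle once attention is confined to pair-preserving automorphisms, which is what makes $H$ computable by the machinery of the previous subsection; and second, the explicit determination of $\delta$ and of its action on $H$, on which the precise isomorphism type of the semidirect product depends.
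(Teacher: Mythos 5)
Your proposal is correct and follows essentially the same route as the paper: the paper likewise splits $\Aut({\goth M}_{(4,\varphi)})$ into the stabiliser of the pair $\{S_A,S_B\}$ — computed exactly as you propose, by feeding the pair-preserving dichotomy into \ref{autBontoB} and the machinery behind \ref{autgroup} to get $C_4\oplus S_2$ resp. $C_4^2\rtimes S_2$ — together with a coset represented by an explicit involution realising the special decomposition (the paper's $\tilde{f}$ resp. $\hat{f}$, given by tables rather than extracted from the proof of \ref{nonoriginal}), after which the commutation rules of that involution with $g_0$ and the $f_{(v,\alpha)}$ determine the semidirect product, just as in your final step. Your observation that the hypothesis of \ref{lem:isoform} is only needed to secure the global ``$B$-blocks to $B$-blocks or $B$-blocks to $A$-blocks'' dichotomy, which pair-preservation supplies for free even when $\varphi$ has a $2$-cycle, is precisely the tacit justification on which the paper relies when it invokes \ref{autBontoB} for ${\cal M}_1$ and ${\cal M}_2$.
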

\begin{proof}
In view of \ref{conclass:strong}, without loss of generality we can consider ${\cal M}_1={\goth M}_{(4,{\varphi}_1)}$ 
with $\varphi_1=(1)(2)(34)$ in case \eqref{autgrup4:1}, and ${\cal M}_2={\goth M}_{(4,\varphi_2)}$ with $\varphi_2=(12)(34)$ in case \eqref{autgrup4:2}
(comp. Figures \ref{levi:moeb_3cyc}, \ref{levi:moeb_2cyc22}). Let $F_s\in\Aut({\cal M}_s)$ for $s=1,2$.
By \ref{cor:all-s-labile}, there is the special decomposition of each of ${\cal M}_s$. Thus, $F_s$  maps the pair $\{S_A,S_B\}$ onto $\{S_A,S_B\}$ or $F_s$ yields the special decomposition of ${\cal M}_s$.
In case $F_s$ maps the pair $\{S_A,S_B\}$ onto $\{S_A,S_B\}$,
by \ref{autBontoB}, there is $v_0\in\{0\}\times\{0\}\times C_2$, 
$\alpha_0\in S_2\times\{\id\}$ for ${\cal M}_1$, or 
$v_0\in C_2\times C_2$, 
$\alpha_0\in S_2$ for ${\cal M}_2$,
such that $F_s=f_{(v_0,\alpha_0)}$ or $F_s=g_{(v_0,\alpha_0)}=g_0f_{(v_0,\alpha_0)}$, respectively for $s=1,2$.
By \ref{presautomorf}, \ref{interautomorf} all maps $F_sf_{(v,\alpha)}$, $F_sg_{(v,\alpha)}$, where  $v\in\{0\}\times\{0\}\times C_2$ and
$\alpha\in S_2\times\{\id\}$ if $s=1$, or $v\in C_2\times C_2$, 
$\alpha\in S_2$ if $s=2$,
 are automorphisms of ${\cal M}_s$ preserving the pair $\{S_A,S_B\}$. Based on the proof of \ref{autgroup}, these maps form the group $C_4\oplus S_2$ if $s=1$, and the group $C_4^2\rtimes S_2$ if $s=2$.
\par
Consider the maps:

\vspace{0.1in}

\noindent
\begin{tabular}{c|cccccccccccccccc}
$x$ & $a_1$ & $a_2$ & $a_3$ & $a_4$ & $b_1$ & $b_2$ & $b_3$ & $b_4$ & $A_1$ & $A_2$ & $A_3$ & $A_4$ & $B_1$ & $B_2$ & $B_3$ & $B_4$ \\
\hline
$\tilde{f}(x)$ & $b_1$ & $b_2$ & $a_4$ & $a_3$ & $a_1$ & $a_2$ & $b_3$ & $b_4$ & $A_2$ & $A_1$ & $B_4$ & $B_3$ & $B_2$ & $B_1$ & $A_4$ & $A_3$ \\
\hline
$\hat{f}(x)$ & $a_2$ & $a_1$ & $b_3$ & $b_4$ & $b_1$ & $b_2$ & $a_3$ & $a_4$ & $B_2$ & $B_1$ & $A_4$ & $A_3$ & $A_1$ & $A_2$ & $B_3$ & $B_4$
\end{tabular}

\vspace{0.1in}

The map $\tilde{f}$ is an automorphism, which yields a special decomposition of ${\cal M}_1$; and $\hat{f}$ is an automorphism, which yields a special decomposition of ${\cal M}_2$. 
Assume that $F_s$ yields a special decomposition of ${\cal M}_s$.
Then $F_1=\tilde{f}F'_1$ and
$F_2=\hat{f}F'_2$, where $F'_s$ is the automorphism of ${\cal M}_s$ given by
\eqref{map:pres} or \eqref{map:inter}.
\par
Let us set the commutativity rules in the automorphism group of ${\cal M}_s$. 
By \eqref{label:points}, the points of ${\cal M}_1$, ${\cal M}_2$ correspond to the sequences $(t,i,k,\varepsilon)$ with 
$\varepsilon=1,-1$. 
Using the convention introduced at the beginning of this paragraph we get 
$t=1,2$, $\nu_1=1$, $\nu_2=2$, $m_1=2$, $m_2=1$ and
$X^1_1=\{1\}$, $X^1_2=\{2\}$, $X^2_1=\{3,4\}$ for ${\cal M}_1$;
$t=1$, $\nu_1=2$, $m_1=2$, and
$X^2_1=\{1,2\}$, $X^2_2=\{3,4\}$ for ${\cal M}_2$.
To avoid any misunderstanding, in case ${\cal M}_2$ we will write $Y^2_1$, $Y^2_2$ instead of $X^2_1$, $X^2_2$ respectively. 
Then $\tilde{f}$ maps the points of ${\cal M}_1$ by the formula: 
\begin{equation*}\label{map:interpairs1}
\tilde{f}((t,k,i,\varepsilon))=
\left\{
\begin{array}{lll}
(t,k,i,-\varepsilon) & \text {for } i+\mu_k^t\in X^1_1, X^1_2,\\
(t,k,i+1\mod 2,\varepsilon) & \text {for } \varepsilon=1,\; i+\mu_k^t\in X^2_1, \\
(t,k,i,\varepsilon) & \text {for } \varepsilon=-1,\; i+\mu_k^t\in X^2_1.
\end{array}
\right.
\end{equation*}
The map $\hat{f}$ can be defined on points of ${\cal M}_2$ as:
\begin{equation*}\label{map:interpairs2}
\hat{f}((t,k,i,\varepsilon))=
\left\{
\begin{array}{lll}
(t,k,i+1\mod 2,\varepsilon) & \text {for } \varepsilon=1,\; i+\mu_k^t\in Y^2_1, \\
(t,k,i,\varepsilon) & \text {for } \varepsilon=-1,\; i+\mu_k^t\in Y^2_1, \\
(t,k,i,-\varepsilon) & \text {for } i+\mu_k^t\in Y^2_2.
\end{array}
\right.
\end{equation*}
In order to simplify the notation we will not add ``$\mod 2$" in the forthcoming calculations, as we believe it will be clear from the context.
Let us establish formulas for the compositions of $\tilde{f}$ and $g_0$ \\
$\begin{array}{ll}
\text{for } i+\mu_k^t\in X^1_1, X^1_2\colon & \\
 (t,k,i,1)\mappedby{g_0}(t,k,i,-1)\mappedby{\tilde{f}} 
(t,k,i,1), & \\ 
 (t,k,i,1)\mappedby{\tilde{f}}(t,k,i,-1)\mappedby{g_0} 
(t,k,i,1), & \\
(t,k,i,-1)\mappedby{g_0}(t,k,i,1)\mappedby{\tilde{f}} 
(t,k,i,-1), & \\
 (t,k,i,-1)\mappedby{\tilde{f}}(t,k,i,1)\mappedby{g_0} 
(t,k,i,-1); & \\
 \text{for } i+\mu_k^t\in X^2_1\colon  & \\
 (t,k,i,1)\mappedby{g_0}(t,k,i-1,-1)\mappedby{\tilde{f}}
(t,k,i-1,-1), & \\ 
(t,k,i,1)\mappedby{\tilde{f}}(t,k,i+1,1)\mappedby{g_0}
(t,k,i,-1), & \\
 (t,k,i,-1)\mappedby{g_0}(t,k,i,1)\mappedby{\tilde{f}}
(t,k,i+1,1), & \\
 (t,k,i,-1)\mappedby{\tilde{f}}(t,k,i,-1)\mappedby{g_0}
(t,k,i,1); &  
\end{array}$ 
\\
and formulas for compositions of $\tilde{f}$ and an automorphism $f=f_{(v,\alpha)}$ of ${\cal M}_1$\\
$\begin{array}{ll}
\text{for } i+\mu_k^t\in X^1_1, X^1_2\colon & \\
(t,k,i,1)\mappedby{f}(t,\alpha(k),i,1)\mappedby{\tilde{f}}
(t,\alpha(k),i,-1),  & \\ 
(t,k,i,1)\mappedby{\tilde{f}}(t,k,i,-1)\mappedby{f}
(t,\alpha(k),i,-1);  & \\
(t,k,i,-1)\mappedby{f}(t,\alpha(k),i,-1)\mappedby{\tilde{f}}
(t,\alpha(k),i,1),  & \\ 
(t,k,i,-1)\mappedby{\tilde{f}}(t,k,i,1)\mappedby{f}
(t,\alpha(k),i,1);  & \\
\text{for } i+\mu_k^t\in X^2_1\colon & \\
(t,k,i,1)\mappedby{f}(t,k,i+v^2_1,1)\mappedby{\tilde{f}}
(t,k,i+v^2_1+1,1),  & \\ 
(t,k,i,1)\mappedby{\tilde{f}}(t,k,i+1,1)\mappedby{f}
(t,k,i+1+v^2_1,1),  & \\
(t,k,i,-1)\mappedby{f}(t,k,i+v^2_1,-1)\mappedby{\tilde{f}}
(t,k,i+v^2_1,-1),  & \\
(t,k,i,-1)\mappedby{\tilde{f}}(t,k,i,-1)\mappedby{f}
(t,k,i+v^2_1,-1).  & 
\end{array}$
\\
This all proves that
\begin{ctext}
$\tilde{f}f=f\tilde{f}$ and $\tilde{f} g_0=\tilde{f} g_{(\mathbf{0},\id,)}=
g_{(\tau_{(0,0,1)}(\mathbf{0}),\id,)}\tilde{f}$,
\end{ctext}
where $\tau_{(0,0,1)}(v)=\tau_{(0,0,1)}((v^1_1,v^1_2,v^2_1))=(v^1_1,v^1_2,v^2_1+1)$. 
Moreover,  $\tilde{f}^2=\id$, and consequently $\{\tilde{f},\id\}=C_2$.
\par
Analogous calculation can be done for $\hat{f}$. Note that the compositions of $\tilde{f}$ and $g_0$ for
$i+\mu_k^t\in X_1^2$, and the compositions of
$\hat{f}$ and $g_0$ for $i+\mu_k^t\in Y_1^2$ coincide. Thus, in ${\cal M}_2$, for $i+\mu_k^t\in Y_1^2$ we need to determine only formulas of compositions of $\hat{f}$ and $f=f_{(v,\alpha)}$.
Namely \\
$\begin{array}{ll}
(t,k,i,1)\mappedby{f}(t,\alpha(k),i+v_1^1,1)\mappedby{\hat{f}} 
\left\{
\begin{array}{ll}
(t,\alpha(k),i+v_1^1+1,1) & \text{if } \alpha=\id, \\
(t,\alpha(k),i+v_1^1,-1) & \text{if } \alpha=(12), 
\end{array}
\right.
& \\ 
(t,k,i,1)\mappedby{\hat{f}}(t,k,i+1,1)\mappedby{f}
(t,\alpha(k),i+1+v_1^1,1),  & \\
(t,k,i,-1)\mappedby{f}(t,\alpha(k),i+v_1^1,-1)\mappedby{\hat{f}}
\left\{
\begin{array}{ll}
(t,\alpha(k),i+v_1^1,-1) & \text{if } \alpha=\id, \\
(t,\alpha(k),i+v_1^1,1) & \text{if } \alpha=(12), 
\end{array}
\right.
 & \\
(t,k,i,-1)\mappedby{\hat{f}}(t,k,i,-1)\mappedby{f}
(t,\alpha(k),i+v_1^1,-1).  & 
\end{array}$ \\
Let $\Delta_{\alpha}$ be the map associated with $\alpha\in S_2$, which acts on the points of ${\cal M}_2$ by the formula
\begin{equation*}
\Delta_{\alpha}((t,k,i,\varepsilon))=
\left\{
\begin{array}{ll}
(t,k,i+1,\varepsilon) & \text{for } \alpha=(12) \text{ and } \varepsilon =1,\\
(t,k,i,\varepsilon) & \text{for } \alpha=(12) \text{ and } \varepsilon =-1, \\
(t,k,i,-\varepsilon) & \text {for } \alpha=\id.
\end{array}
\right.
\end{equation*}
We fix all compositions of $\hat{f}$ with $g_0$, and of $\hat{f}$ with $f$, for $i+\mu_k^t\in Y_2^2$:\\
$\begin{array}{l}
 (t,k,i,1)\mappedby{\hat{f}}(t,k,i,-1)\mappedby{g_0}
(t,k,i,1), \\ 
(t,k,i,1)\mappedby{g_0}(t,k,i-1,-1)\mappedby{\hat{f}}
(t,k,i-1,1),\\
 (t,k,i,-1)\mappedby{\hat{f}}(t,k,i,1)\mappedby{g_0}
(t,k,i-1,-1),\\
(t,k,i,-1)\mappedby{g_0}(t,k,i,1)\mappedby{\hat{f}}
(t,k,i,-1); \\
 (t,k,i,1)\mappedby{f}(t,\alpha(k),i+v^1_2,1)\mappedby{\hat{f}}
\Delta_{\alpha}((t,\alpha(k),i+v^1_2,1)), \\ 
 (t,k,i,1)\mappedby{\hat{f}}(t,k,i,-1)\mappedby{f}
(t,\alpha(k),i+v^1_2,-1), \\
(t,k,i,-1)\mappedby{f}(t,\alpha(k),i+v^1_2,-1)\mappedby{\hat{f}}
\Delta_{\alpha}((t,\alpha(k),i+v^1_2,-1)), \\ 
 (t,k,i,-1)\mappedby{\hat{f}}(t,k,i,1)\mappedby{f}
(t,\alpha(k),i+v^1_2,1),
\end{array}$
\\
Set $\tau_{(1,1)}(v)=\tau_{(1,1)}((v_1^1,v^1_2))=(v_1^1+1,v^1_2+1)$. Finally we obtain \\
$\hat{f} g_0=\hat{f} g_{(\mathbf{0},\id,)}=
g_{(\tau_{(1,1)}(\mathbf{0}),\id)}\hat{f}$, \\ $\hat{f}f_{(v,\alpha)}=f_{(v,\alpha)}\hat{f}$ if only $\alpha=\id$, \\ $\hat{f}f_{(v,\alpha)}=g_0f_{(v,\alpha)}\hat{f}=g_{(v,\alpha)}\hat{f}$ provided that $i+\mu_k^t\in Y_1^2$ and $\alpha=(12)$, \\
$\hat{f}f_{(v,\alpha)}=g_0^{-1}f_{(v,\alpha)}\hat{f}=g_{(\tau_{(1,1)}(v),\alpha)}\hat{f}$ as long as $i+\mu_k^t\in Y_2^2$
and $\alpha=(12)$.\\
Furthermore, $\hat{f}^2=\id$, and thus $\{\hat{f},\id\}=C_2$.

\end{proof}
 
\section*{Acknowledgements}

First and foremost I would like to thank Professor Krzysztof Pra{\.z}mowski for his support and valuable remarks, that considerably improve the paper. I am also grateful to my colleague Mariusz \.Zynel for fruitful discussions and language tips. 

\end{document}